\DeclareMathOperator*{\esssup}{ess\,sup}
\newcommand{\margnote}[1]{
\ifthenelse{\boolean{shownotes}}%
{\marginpar{\raggedright\tiny\texttt{#1}}}%
{}%
}
\newcommand{\hole}[1]{
\ifthenelse{\boolean{shownotes}}%
{\begin{center} \fbox{ \rule {.25cm}{0cm} \rule[-.1cm]{0cm}{.4cm}
\parbox{.85\textwidth}{\begin{center} \texttt{#1}\end{center}} \rule
{.25cm}{0cm}}\end{center}} {} }
\title[Large friction-High force fields limit for the nonlinear VPFP system]{Large friction-High force fields limit for the nonlinear Vlasov--Poisson--Fokker--Planck system}
\author[Carrillo]{Jos\'{e} A. Carrillo}
\address[Jos\'{e} A. Carrillo]{\newline Mathematical Institute \newline 
University of Oxford, Oxford OX2 6GG, UK.}
\email{carrillo@maths.ox.ac.uk}
\author[Choi]{Young-Pil Choi}
\address[Young-Pil Choi]{\newline Department of Mathematics\newline
Yonsei University, 50 Yonsei-Ro, Seodaemun-Gu, Seoul 03722, Republic of Korea.}
\email{ypchoi@yonsei.ac.kr}
\author[Peng]{Yingping Peng}
\address[Yingping Peng]{\newline School of Mathematical Sciences \newline
University of Electronic Science and Technology of China, Chengdu, 611731, China. 
%\newline
%and  \newline
%Department of Mathematics \newline
%Imperial College London, London, SW7 2AZ, United Kingdom
}
\email{yingping$\_$peng163.com}
\numberwithin{equation}{section}
\newtheorem{theorem}{Theorem}[section]
\newtheorem{lemma}{Lemma}[section]
\newtheorem{corollary}{Corollary}[section]
\newtheorem{proposition}{Proposition}[section]
\newtheorem{remark}{Remark}[section]
\newtheorem{definition}{Definition}[section]
\newcommand{\R}{\mathbb R}
\newcommand{\ls}{\lesssim}
\newcommand{\T}{\mathbb T}
\newcommand{\mc}{\mathcal C}
\newcommand{\bq}{\begin{equation}}
\newcommand{\eq}{\end{equation}}
\newcommand{\e}{\varepsilon}
\newcommand{\lt}{\left}
\newcommand{\rt}{\right}
\newcommand{\pa}{\partial}
\newcommand{\na}{\nabla}
\newcommand{\cd}{\cdot}
\newcommand{\ep}{\varepsilon}
\newcommand{\f}{\frac}
\newcommand{\mh}{\mathcal{H}}
\newcommand{\me}{\mathcal{E}}
\newcommand{\mk}{\mathcal{K}}
\newcommand{\mf}{\mathcal{F}}
\newcommand{\md}{\mathcal{D}}
\newcommand{\intr}{\int_{\R^d}}
\newcommand{\intrr}{\iint_{\R^d \times \R^d}}
\newcommand{\sfH}{\mathsf{H}}
\begin{document}
%%%%%%%%%%%%%%%%
\allowdisplaybreaks

\date{\today}

\subjclass[]{}
\keywords{Vlasov--Poisson--Fokker--Planck equation, aggregation-diffusion equation, relative entropy, modulated energy, high force-field limit.}

\begin{abstract} We provide a quantitative asymptotic analysis for the nonlinear Vlasov--Poisson--Fokker--Planck system with a large linear friction force and high force-fields. The limiting system is a diffusive model with nonlocal velocity fields often referred to as aggregation-diffusion equations. We show that a weak solution to the Vlasov--Poisson--Fokker--Planck system strongly converges to a strong solution to the diffusive model. Our proof relies on the modulated macroscopic kinetic energy estimate based on the weak-strong uniqueness principle together with a careful analysis of the Poisson equation.

\end{abstract}

\maketitle \centerline{\date}

%\tableofcontents

%%%%%%%%%%%%%%%%%%%%%%%%%%%%%%%%%%%%%%%%%%%%%%%%%%%%%%%%%%%%%%%%%%%%%%%%%%%%%%%%%5
%
%
%                        Section: Introduction
%
%
%%%%%%%%%%%%%%%%%%%%%%%%%%%%%%%%%%%%%%%%%%%%%%%%%%%%%%%%%%%%%%%%%%%%%%%%%%%%%%%%%
\section{Introduction}
We are concerned in this work with the asymptotic analysis of the nonlinear Vlasov--Poisson--Fokker--Planck (in short, VPFP) system with a linear damping in a high force-field regime. More precisely, let $f = f(x,v,t)$ be the particle, electron for instance, distribution function in the phase space $(x,v)$ at time $t > 0$. Our main system reads as
\begin{align}\label{main_kin0}
\begin{aligned}
&\pa_t f + v \cdot \nabla_x f - \frac{1}{m_e}\nabla_v \cdot (( v + \nabla_x V + \nabla_x \Phi)f) = \frac{1}{\tau_e}\mathcal{N}_{FP}(f), \quad (x,v,t) \in \R^d \times \R^d \times \R_+,\cr
&-\Delta_x \Phi = \rho, \quad \rho(x,t) = \intr f(x,v,t)\,dv,
\end{aligned}
\end{align}
where $V: \R^d \to \R_+$ represents the confinement potential, and $\mathcal{N}_{FP}$ denotes the nonlinear Fokker--Planck operator \cite{Vill02} given by
\[
\mathcal{N}_{FP}(f) := \nabla_v \cdot \lt((v - u)f + \sigma_e\nabla_v f\rt), \quad \mbox{where }   u := \frac{\intr vf\,dv}{\intr f\,dv}.
\]
Here the positive constants $m_e$ and $\tau_e$ represent the mass of the electrons and the relaxation time, respectively, and $\sqrt{\sigma_e} = \sqrt{(k_B T_{th})/m_e}$ denotes the thermal velocity,  with the Planck constant $k_B$ and the temperature of the thermal bath $T_{th}$. Other physical constants are taken to be unity for simplicity. The first term in $\mathcal{N}_{FP}(f)$ describes the nonlinear relaxation towards the local velocity $u$, and it can be rigorously derived from velocity alignment forces \cite{KMT14}. 

For the system \eqref{main_kin0}, we are interested in the common large friction and high force fields regime. To be more specific, we consider a small mass of electrons $m_e$ and a fast relaxation time $\tau_e$. In particular, for sufficiently small $\e \in (0,1)$, we choose $m_e = \e$, $T_{th} \simeq 1$, and $\tau_e = \e^{2+\delta}$ for some $\delta > 0$. Due to the relaxation between $m_e$ and $T_{th}$, we obtain $\sigma_e = k_B \e^{-1}$, and by setting $k_B = 1$, we deduce from \eqref{main_kin0} that
\begin{align}\label{main_kin}
\begin{aligned}
&\pa_t f + v \cdot \nabla_x f - \frac1\e\nabla_v \cdot (( v + \nabla_x V + \nabla_x \Phi)f) = \frac{1}{\e^{2+\delta}} \nabla_v \cdot \lt((v - u)f + \frac1\e\nabla_v f\rt),\cr
&-\Delta_x \Phi = \rho.
\end{aligned}
\end{align}
Here we omitted the $\e$-dependence of solutions $f$ for the sake of notational simplicity. For the rest of this paper, without loss of generality, we assume that the particle distribution function $f$ has a unit mass, i.e. $\iint_{\R^d \times \R^d} f\,dxdv = 1$ for all times due to the conservation of mass.

We study the asymptotic behavior of the nonlinear VPFP system \eqref{main_kin} as $\e \to 0$. To be more specific, we will show that the VPFP system \eqref{main_kin} converges towards the following drift-diffusion model with nonlocal velocity fields often referred to as {\it aggregation-diffusion equations}:
\bq\label{main_conti}
\pa_t \bar\rho + \nabla_x \cdot (\bar\rho \bar u) = 0, \quad  \bar\rho \bar u = -\bar \rho(\nabla_x V + \nabla_x \bar \Phi + \nabla_x \log \bar\rho), \quad -\Delta_x \bar\Phi = \bar\rho.
\eq
Note that if we define the free energy $E[\bar\rho]$ associated to the above by
\[
E[\bar\rho] = \intr \bar\rho \log \bar\rho\,dx + \intr V \bar\rho\,dx + \frac12 \intr \bar\Phi \bar\rho\,dx,
\]
the velocity field $ \bar u$ of equation \eqref{main_conti} can be rewritten as
\[
 \bar u = -\nabla_x \frac{\delta E}{\delta \bar\rho} [\bar\rho],
\]
where $ \frac{\delta E}{\delta \bar\rho}$ denotes the variation of the free energy $E$ with respect to the mass density $\bar\rho$. This shows that the equation \eqref{main_conti} has a gradient flow structure \cite{JKO98,CMV03,AGS05,CCY19}. 

%%%%%%%%%%%%%%%%%%%%%%%%%%%%%%%%%%%%%%%%%%%%%%%%%%%%%%%%%%%%%%%%%%%%%%%%%%%%%%%%%5
%
%
%                        \subsection{Formal derivation}
%
%
%%%%%%%%%%%%%%%%%%%%%%%%%%%%%%%%%%%%%%%%%%%%%%%%%%%%%%%%%%%%%%%%%%%%%%%%%%%%%%%%%
\subsection{Formal derivation}
Let us explain how we can derive the equations \eqref{main_conti} as the limiting equation of \eqref{main_kin} as $\e \to 0$ at the formal level. If we define local particle density and moment by
\bq\label{def_ru}
\rho := \intr f\,dv \quad \mbox{and} \quad \rho u := \intr vf\,dv,
\eq
then by taking into account zero and first moments on $f$ of \eqref{main_kin}, we find that $\rho$ and $u$ satisfy
\begin{align}\label{local_cons}
\begin{aligned}
&\pa_t\rho + \nabla_x \cdot (\rho u) = 0,\cr
&\pa_t ( \rho u) + \nabla_x \cdot ( \rho  u\otimes u) + \frac1\e \nabla_x \rho = -\frac1\e \rho  u - \frac1\e\rho(\nabla_x V + \nabla_x \Phi) + e,\cr
&-\Delta_x \Phi = \rho,
\end{aligned}
\end{align}
where $e$ is given by
\bq\label{eq_e}
e := \nabla_x \cdot \lt( \intr \lt(u\otimes u - v \otimes v + \frac1\e \mathbb{I}\rt)f\,dv \rt).
\eq 
If we set a local Maxwellian
\[
M^\e_{u(x)}(v) := \frac{\e^{d/2}}{(2\pi)^{d/2}} \exp\lt( -\frac{\e|u(x)-v|^2}{2}\rt),
\]
then
\[
\intr M^\e_{u(x)}(v)\,dv = 1 \quad \mbox{for all } x \in \R^d
\]
and
\[
\mathcal{N}_{FP}(f) = \frac1\e \nabla_v \cdot \lt(f \nabla_v \log \frac{f}{M^\e_u} \rt).
\]
On the other hand, for $0<\e \ll 1$, we expect from \eqref{main_kin} that $\mathcal{N}_{FP}(f) \simeq 0$. This infers
\[
f(x,v) \simeq  \rho(x) M^\e_{u(x)}(v) = \frac{\e^{d/2}}{(2\pi)^{d/2}} \rho(x) \exp\lt( -\frac{\e|u(x)-v|^2}{2}\rt) \quad \mbox{for} \quad 0<\e \ll 1.
\]
Moreover we notice that
\begin{align*}
\intr (v \otimes v) \exp\lt( -\frac{\e|u(x)-v|^2}{2}\rt)\,dv&= \intr (u \otimes u) \exp\lt( -\frac{\e|u(x)-v|^2}{2}\rt)\,dv\cr
&\quad  + \intr (v - u)\otimes (v-u) \exp\lt( -\frac{\e|u(x)-v|^2}{2}\rt)\,dv\cr
&=  \lt(\frac{\e^{d/2}}{(2\pi)^{d/2}}\rt)^{-1} \lt((u \otimes u) + \frac1\e  \mathbb{I} \rt).
\end{align*}
Thus we obtain
\[
e \simeq \nabla_x \cdot \lt( \rho \intr \lt(u\otimes u - v \otimes v + \frac1\e \mathbb{I}\rt)M^\e_u\,dv \rt) =0
\]
for $0<\e \ll 1$. Putting this into \eqref{local_cons} yields
\begin{align*}
&\pa_t\rho + \nabla_x \cdot (\rho u) = 0,\cr
&\pa_t ( \rho u) + \nabla_x \cdot ( \rho  u\otimes u) + \frac1\e \nabla_x \rho = -\frac1\e \rho  u - \frac1\e\rho(\nabla_x V + \nabla_x \Phi),\cr
&-\Delta_x \Phi = \rho
\end{align*}
for $0 < \e \ll 1$, and again we deduce from the momentum equation in the above system that
\[
\rho u \simeq  - \rho(\nabla_x V + \nabla_x \Phi) - \nabla_x \rho = -\rho (\nabla_x V + \nabla_x \Phi + \nabla_x \log \rho)
\]
for $0 < \e \ll 1$. This reduces to our main limiting equations \eqref{main_conti}.

\begin{remark}\label{rem-1}The solution of the Poisson equation $\Phi$ can be uniquely expressed as a convolution $K \star \rho$, i.e. $\Phi = K \star \rho$, where the Coulomb interaction potential $K$ is explicitly given by
\[
K(x) = \left\{ \begin{array}{ll}
 -\frac{|x|}{2} & \textrm{for $d=1$,}\\[2mm]
 -\frac{1}{2\pi} \log |x| & \textrm{for $d=2$,}\\[2mm]
\frac{1}{(d-2)|B(0,1)|}\frac{1}{|x|^{d-2}} & \textrm{for $d \geq 3$},
  \end{array} \right.
\]
where $|B(0,1)|$ denotes the volume of unit ball $B(0,1)$ in $\R^d$, i.e. $|B(0,1)| = \pi^{d/2}/\Gamma(d/2+1)$. Here $\Gamma$ is the Gamma function.
\end{remark}
%%%%%%%%%%%%%%%%%%%%%%%%%%%%%%%%%%%%%%%%%%%%%%%%%%%%%%%%%%%%%%%%%%%%%%%%%%%%%%%%%5
%
%
%                       \subsection{Literature review}
%
%
%%%%%%%%%%%%%%%%%%%%%%%%%%%%%%%%%%%%%%%%%%%%%%%%%%%%%%%%%%%%%%%%%%%%%%%%%%%%%%%%%
\subsection{Literature review}
In the absence of the nonlinear Fokker--Planck operator, the asymptotic analysis for the kinetic equation  \eqref{main_kin0} with regular interaction forces in the high force-field regime, i.e. \eqref{main_kin0} with $m_e = \e$ and $\mathcal{N}_{FP} \equiv 0$, is first studied in \cite{Jabin00} and refined in \cite{FS15}. To be more precise, the following aggregation equation can be rigorously derived from \eqref{main_kin0} with replacing $\nabla_x \Phi$ by $\nabla_x \tilde K \star \rho$ and taking $m_e = \e$ and $\mathcal{N}_{FP} \equiv 0$ as $\e \to 0$:
\bq\label{agg_eq}
\pa_t \bar\rho + \nabla_x \cdot (\bar\rho \bar u) = 0, \quad   \bar\rho \bar u = -\bar \rho(\nabla_x V + \nabla_x \tilde K  \star \bar\rho)
\eq
under suitable regularity assumptions on $\tilde K$ and $V$. In \cite{FS15,Jabin00}, types of compactness arguments are employed, and thus the convergences of solutions are only qualitatively investigated. On the other hand, in a recent paper \cite{CC20} a new idea of establishing the large friction limit of kinetic equation to \eqref{agg_eq} is proposed. Introducing an intermediate system, which is pressureless Euler-type system, and employing the second order Wasserstein distance between $\rho$ and $\bar\rho$, a quantified high force-field limit for the equation \eqref{main_kin0} with $m_e = \e$, $\tau_e  = \e$, and $\sigma_e=0$ is provided. However, a rather strong regularity for the interaction potential $W$ is still imposed, and thus singular interactions cannot be covered. 

In case $\sigma_e > 0$, i.e. with diffusive force, there are several works on the overdamped limit for the linear Vlasov--Fokker--Planck-type equation, i.e. it is a form of \eqref{main_kin0} with the linear diffusion $\Delta_v f$ instead of $\mathcal{N}_{FP}(f)$ on the right hand side. In \cite{DLPSS18, DLPS17}, the rigorous passage from the linear Vlasov--Fokker--Planck-type equation to the diffusive model is established by using a variational technique. In particular, in \cite{DLPSS18}, a quantitative estimate is obtained when there is no nonlocal interaction forces. On the other hand, nonlocal interactions are considered in \cite{DLPS17}, and a qualitative error estimate is investigated based on compactness arguments combined with duality methods.  Very recently, in \cite{CTpre} the quantified overdamped limit for the Vlasov--Poisson--Fokker--Planck equation with linear diffusion and singular interaction forces is analysed. In particular, either attractive or repulsive Coulomb potential can be taken into account, and the equation \eqref{main_conti} with the associated velocity fields is rigorously derived. The argument used in \cite{CTpre} relies on the evolution-variational-like inequality for Wasserstein gradient flows, and the convergence of $\rho$ towards $\bar\rho$ is obtained in the Wasserstein distance of order 2. We also refer to \cite{CCT19,CPW20,CG07,LT13,LT17} for the rigorous derivation of aggregation-diffusion-type equations from Euler-type system through a singular limit and \cite{GNPS05,NPS01,PS00} for the other singular limits of the VPFP system with linear diffusion, namely the parabolic and hyperbolic scalings.
%%%%%%%%%%%%%%%%%%%%%%%%%%%%%%%%%%%%%%%%%%%%%%%%%%%%%%%%%%%%%%%%%%%%%%%%%%%%%%%%%5
%
%
%                     \subsection{Contribution}
%
%
%%%%%%%%%%%%%%%%%%%%%%%%%%%%%%%%%%%%%%%%%%%%%%%%%%%%%%%%%%%%%%%%%%%%%%%%%%%%%%%%%
\subsection{Contribution}
In the present work, we establish quantitative strong convergences of the kinetic equation \eqref{main_kin} towards the aggregation-diffusion equation \eqref{main_conti} when $\e \to 0$. We identify the asymptotic regime where the VPFP system \eqref{main_kin0} well approximates the aggregation-diffusion equation \eqref{main_conti}; weak solutions to the system \eqref{main_kin} strongly converge towards the unique strong solution to the equation \eqref{main_conti}. We would like to emphasize that these quantitative strong convergences have not been investigated so far, up to our best knowledge. We show the $L^1$-convergence of $(\rho, \rho u)$ towards $(\bar\rho, \bar\rho \bar u)$, in particular this implies the almost everywhere convergence up to a subsequence.  We also would like to emphasize that the asymptotic regime treated here differs substantially from the high field or hyperbolic scaling regime typically considered to study the asymptotic analysis of Vlasov--Fokker--Planck-type equations with linear relaxation operator \cite{GNPS05,NPS01}. To the best of our knowledge, up to now it has not been observed that the derivation of aggregation-diffusion equations can be established via the combined large friction-high force-field limit. 

In \cite{CTpre,DLPSS18, DLPS17}, an intermediate equation via a coarse-graining map is introduced for the overdamped limit for the Vlasov--Fokker--Planck-type equation. On the other hand,  in case $\sigma_e = 0$ as mentioned above the pressureless Euler-type system is considered as the associated intermediate system for \eqref{main_kin0} in \cite{CC20}. However, these strategies are not available for our problem due to the nonlocal alignment force term in the operator $\mathcal{N}_{FP}$. One can think of the isothermal Euler-type system as the intermediate system and follow the methodology proposed in \cite{CC20}. In this case, it seems hard to have the uniform-in-$\e$ Lipschitz estimate on the velocity field due to the presence of pressure, see \cite[Section 3.1]{CC20}. For that reason, we directly estimate the error between two equations \eqref{main_kin} and \eqref{main_conti} without introducing the intermediate system. We first reformulate the limiting equation \eqref{main_conti} as the conservative form, isothermal Euler-type system, with an error term. We then employ the relative entropy method. This resembles the strategy used in \cite{CPW20,LT13,LT17}, however careful analysis on the entropy and the error term is needed. In particular, in \cite{CPW20,LT17}, $L^\infty$-bound assumption on $\bar u$ is used, and this makes some estimates regarding the error term comfortable. However, due to the presence of the confinement potential $V$, it seems impossible to impose that assumption in our case when we consider the quadratic confinement potential $V=|x|^2/2$. In order to handle this issue, we observe a cancellation structure and use the entropy inequality, which requires higher-order regularity of solutions. See Lemma \ref{lem_ee} and Remark \ref{rmk_gd} below for more detailed discussion. Moreover, we provide the required existence theories and needed estimates for the equations \eqref{main_kin} and \eqref{main_conti} to make all of our results self-contained and fully rigorous. 
%%%%%%%%%%%%%%%%%%%%%%%%%%%%%%%%%%%%%%%%%%%%%%%%%%%%%%%%%%%%%%%%%%%%%%%%%%%%%%%%%5
%
%
%                       \subsection{Notation}
%
%
%%%%%%%%%%%%%%%%%%%%%%%%%%%%%%%%%%%%%%%%%%%%%%%%%%%%%%%%%%%%%%%%%%%%%%%%%%%%%%%%%
\subsection{Notation}
Let us introduce several notations used throughout the present work. For functions, $f(x,v)$ and $g(x)$, $\|f\|_{L^p}$ and $\|g\|_{L^p}$ represent the usual $L^p(\R^d \times \R^d)$-and $L^p(\R^d)$-norms, respectively. We denote by $C$ a generic positive constant which is independent of $\e$. $C = C(\alpha,\beta, \cdots)$ stands for a positive constant depending on $\alpha,\beta, \cdots$. $f \ls g$ represents that there exists a positive constant $C > 0$ such that $f \leq Cg$. For simplicity of notation, we often drop $x$-dependence of differential operators, i.e. $\nabla f:= \nabla_x f$ and $\Delta f := \Delta_x f$ whenever there is no any confusion. For any nonnegative integer $k$ and $p \in [1,\infty]$, $W^{k,p} = W^{k,p}(\R^d)$ stands for the $k$-th order $L^p$ Sobolev space. In particular, if $p=2$, we denote by $H^k=H^k(\R^d) = W^{k,2}(\R^d)$. $\mc^k([0,T];E)$ is the set of $k$-times continuously differentiable functions from an interval $[0,T] \subset \R$ into a Banach space $E$, and $L^p(0,T;E)$ is the set of measurable functions from an interval $(0,T)$ to a Banach space $E$, whose $p$-th power of the $E$-norm is Lebesgue measurable.  $\nabla^k$ stands for any partial derivative $\pa^\alpha$ with multi-index $\alpha$, $|\alpha| = k$. 
 
 %%%%%%%%%%%%%%%%%%%%%%%%%%%%%%%%%%%%%%%%%%%%%%%%%%%%%%%%%%%%%%%%%%%%%%%%%%%%%%%%%5
%
%
%                      \subsection{Main result}
%
%
%%%%%%%%%%%%%%%%%%%%%%%%%%%%%%%%%%%%%%%%%%%%%%%%%%%%%%%%%%%%%%%%%%%%%%%%%%%%%%%%%
\subsection{Main result}

We first introduce notions of solutions to the VPFP equations \eqref{main_kin} and the aggregation-diffusion equations \eqref{main_conti}  below.

\begin{definition}\label{def_weak} For given $T \in (0,\infty)$, $f$ is a weak solution of \eqref{main_kin} on the time interval $[0,T]$ if and only if the following conditions are satisfied:
\begin{itemize}
\item[(i)] $f \in L^\infty(0,T; (L^1_+ \cap L^\infty)(\R^d \times \R^d))$,
\item[(ii)] for any $\varphi \in \mc^\infty_c(\R^d \times \R^d \times [0,T])$,
\begin{align*}
&\int_0^t  \intrr f\lt(\pa_s \varphi + v \cdot \nabla \varphi - \frac1\e(v + \nabla V + \nabla \Phi) \cdot \nabla_v \varphi \rt) dxdvds\cr
&\quad +  \frac{1}{\e^{2+\delta}}\int_0^t  \intrr f\lt( (u-v) \cdot \nabla_v \varphi + \frac1\e \Delta_v \varphi \rt)\,dxdvds = \intrr f_0 \varphi(x,v,0)\,dxdv.
\end{align*}
\end{itemize}
Here $L^1_+(\R^d \times \R^d)$ denotes a set of nonnegative $L^1(\R^d \times \R^d)$-functions.
\end{definition}

\begin{definition}\label{def_strong} For given $T \in (0,\infty)$, $\bar\rho$ is a strong solution of \eqref{main_conti} on the time interval $[0,T]$ if and only if the following conditions are satisfied:
\begin{itemize}
\item[(i)] $\bar\rho  \in \mc([0,T]; L^1_V(\R^d)) \cap L^\infty(0,T;W^{1,1} \cap W^{1,\infty}(\R^d))$, $\nabla \log \bar\rho \in L^\infty(0,T;W^{2,\infty}(\R^d))$,
\item[(ii)] $(\bar\rho , \bar u)$ satisfies \eqref{main_conti} in the sense of distributions.
\end{itemize}
Here $L^1_V(\R^d)$ denotes the space of weighted measurable functions by $1+V$ with the norm
\[
\|\bar\rho\|_{L^1_V}:= \intr (1 + V) \bar\rho\,dx.
\]
\end{definition}

\begin{remark} The regularity assumption on $\bar\rho$ in Definition \ref{def_strong} (i) can be replaced by
\[
\bar\rho  \in \mc([0,T]; L^1_V(\R^d)) \cap L^\infty(0,T;W^{3,1} \cap W^{3,\infty}(\R^d)),
\]
see Section \ref{sec_rem} for details. 
\end{remark}

Since the global-in-time existence of weak solution for the nonlinear VPFP equations \eqref{main_kin0} in the sense of Definition \ref{def_weak} is obtained in \cite{CCJpre}, we do not give any details on that here. We refer to \cite{YP16,KMT13} for the global-in-time existence of weak and strong solutions for \eqref{main_kin0} with $V \equiv 0$ and $\Phi \equiv 0$. For the existence theory for the Vlasov--Poisson equation with the linear diffusion, i.e. \eqref{main_kin0} with $\Delta_v f$ instead of $\mathcal{N}_{FP}(f)$, we refer to \cite{B93,CS95,V91,VO90}. Qualitative properties and existence of solutions to aggregation-diffusion equations have been studied under different assumptions \cite{CMV03, AGS05,CCY19} and the references therein.

We next define a modulated energy $\sfH_\e = \sfH_\e((\rho,u)|(\bar\rho, \bar u))$ by
\[
\sfH_\e((\rho,u)|(\bar\rho, \bar u)) := \f12\intr\rho|u-\bar{u}|^2\,dx  +   \f{1}{\e}\intr \int_{\bar \rho}^{\rho} \frac{\rho - z}{z}\,dzdx + \f{1}{2\e}\intr|\na (\Phi - \bar\Phi)|^2 \,dx.
\]

We now state our main theorem.

\begin{theorem}\label{thm_main}
Let $T>0$. Let $f$ be the solution to the equation \eqref{main_kin} in the sense of Definition \ref{def_weak} and $\bar\rho$ be the strong solution to the equation \eqref{main_conti} in the sense of Definition \ref{def_strong} on the time interval $[0,T]$. Denote by 
$$
M_0=\intr \lt(\intr f_0 \frac{|v|^2}{2}\,dv - \rho_0|u_0|^2\rt)dx
$$
and
$$
\bar M_0=  \intr  \lt(\intr f_0\log f_0\,dv - \rho_0 \log\rho_0\rt)dx\,,
$$
the nonnegative difference between the mesoscopic and the macroscopic initial kinetic energy and entropy for \eqref{main_kin}. Then for $\e \in (0,1)$ small enough we have the following estimates.
\begin{itemize}
\item[(i)] Case with the confinement ($V(x) = |x|^2/2$): 
\begin{align*}
 \sfH_\e(t) +  \frac{1}{2\e} \int_0^t \intr \rho|u-\bar u|^2\,dxds \leq &\, C \sfH_\e(0)  + C M_0+C\e^{\delta}\mathcal{F}_{\e}(f_0) +   C\e^{\delta-d/2-1}    +    C\e  + \frac C\e \bar M_0.   
\end{align*}
In particular, this implies
\begin{align*}
\begin{aligned}
&\intr \int_{\bar \rho}^{\rho} \frac{\rho - z}{z}\,dz\,dx  +   \intr|\na (\Phi - \bar\Phi)|^2\, dx  +   \int_0^t \intr \rho|u-\bar u|^2\,dxds
 \cr
 &\qquad\qquad \leq C\e\sfH_\e(0)   +C\e M_0+C\e^{\delta+1}e^{2\e^{\delta}T}\mathcal{F}_{\e}(f_0)  +   C\e^{\delta-d/2}    +    C\e^2 +  C \bar M_0.   
\end{aligned}
\end{align*}
\item[(ii)] Case without the confinement ($V \equiv 0$): 
\begin{align*}
\sfH_\e(t) +  \frac{1}{4\e} \int_0^t \intr \rho|u-\bar u|^2\,dxds \leq C \sfH_\e(0)+ C M_0+C\ep^{2+2\delta}E_\e(f_0) +    C\e   +   C\e^{\delta-1} + \frac{C}{\ep} \bar M_0 .
\end{align*}
In particular, this implies
\begin{align*}
\begin{aligned}
&\intr \int_{\bar \rho}^{\rho} \frac{\rho - z}{z}\,dz\,dx  +   \intr|\na (\Phi - \bar\Phi)|^2\, dx  +   \int_0^t \intr \rho|u-\bar u|^2\,dxds\cr
&\qquad\qquad \leq  C\e\sfH_\e(0)   +C\e M_0      +C\ep^{3+2\delta}E_\e(f_0)    + C\e^2  +   C\e^{\delta} +  C \bar M_0.   
\end{aligned}
\end{align*}
\end{itemize}

Here $C>0$ is independent of $\e > 0$,
\[
\mathcal{F}_\e(f_0) := \frac1\e \intrr f_0\log f_0\,dxdv + \intrr f_0 \frac{|v|^2}{2}\,dxdv + \frac{1}{\e} \intr  \lt(V+ \frac12 \Phi_0\rt) \rho_0\,dx,
\]
and
\bq\label{def_e}
E_\e(f_0):=\intrr f_0\frac{|v|^2}{2}dxdv+\frac{1}{2\ep}\intr\rho_0\Phi_0\,dx.
\eq
\end{theorem}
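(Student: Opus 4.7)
The plan is to carry out a modulated-energy (relative-entropy) estimate directly between \eqref{main_kin} and \eqref{main_conti}, without the intermediate Euler system used in \cite{CC20}. First, I would rewrite the limit equation \eqref{main_conti} in conservative, isothermal Euler-type form with a consistency error:
\begin{align*}
&\pa_t\bar\rho + \na\cdot(\bar\rho\bar u) = 0,\\
&\pa_t(\bar\rho\bar u) + \na\cdot(\bar\rho\bar u\otimes\bar u) + \f1\e\na\bar\rho = -\f1\e\bar\rho\bar u - \f1\e\bar\rho(\na V + \na\bar\Phi) + \bar e,
\end{align*}
where $\bar e := \pa_t(\bar\rho\bar u) + \na\cdot(\bar\rho\bar u\otimes\bar u)$. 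Using $\bar u = -(\na V + \na\bar\Phi + \na\log\bar\rho)$ together with the regularity granted by Definition \ref{def_strong}, $\bar e$ is bounded uniformly in $\e$ by norms of $\bar\rho$, in the spirit of the remainders in \cite{LT17,CPW20}.

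The second step is to differentiate $\sfH_\e$ along the moment system \eqref{local_cons} for $(\rho,\rho u)$ and along this reformulation of $(\bar\rho,\bar u)$. The kinetic piece $\tfrac12\intr\rho|u-\bar u|^2\,dx$ produces the coercive dissipation $-\tfrac1\e\intr\rho|u-\bar u|^2\,dx$ from the $-\tfrac1\e\rho u$ friction, plus cross terms pairing $\rho(u-\bar u)$ against $\tfrac1\e\na\bar\rho$, against $\bar e$, and against the moment-closure error $e$ from \eqref{eq_e}. The modulated entropy piece $\tfrac1\e\intr\!\int_{\bar\rho}^\rho (\rho-z)/z\,dz\,dx$, differentiated using the two continuity equations, produces precisely the counterpart that cancels the $\tfrac1\e\na\bar\rho$ cross term; and the modulated Poisson piece $\tfrac{1}{2\e}\intr|\na(\Phi-\bar\Phi)|^2\,dx$, via $-\Delta(\Phi-\bar\Phi)=\rho-\bar\rho$ together with the two continuity equations, absorbs the $\na(\Phi-\bar\Phi)$ cross term. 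After these cancellations one arrives at a Gronwall-type inequality of the form
\[
\f{d}{dt}\sfH_\e + \f1\e\intr\rho|u-\bar u|^2\,dx \ls \sfH_\e + \intr e\cdot(\bar u - u)\,dx + R_\e,
\]
where $R_\e$ collects the $\bar e$ cross terms, absorbed via Cauchy--Schwarz and the regularity of $\bar\rho$.

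The hard part, and the one requiring the structural observations alluded to in Section 1.3, is controlling the moment-closure error $e$. I would exploit the vanishing of $\intr\!\left((v-u)\otimes(v-u) - \f1\e\mathbb{I}\right)\rho M_u^\e\,dv$ to rewrite
\[
e = -\na\cdot\intr\!\left((v-u)\otimes(v-u) - \f1\e\mathbb{I}\right)(f-\rho M_u^\e)\,dv,
\]
so that the error depends only on the deviation from the local Maxwellian. That deviation is controlled by the Fokker--Planck entropy dissipation $\e^{-(3+\delta)}\intrr f\,|\na_v\log(f/M_u^\e)|^2\,dvdx$ through a weighted Csisz\'ar--Kullback--Pinsker or log-Sobolev-type inequality. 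Integrating the kinetic entropy balance for $\mathcal{F}_\e$ in time extracts this dissipation, but the comparison introduces the macroscopic entropy defect $\bar M_0$ as leftover, since the kinetic entropy is compared against the local-Maxwellian entropy rather than against its macroscopic projection $\rho\log\rho$; this is the source of the $\bar M_0/\e$ and $\e^\delta\mathcal{F}_\e(f_0)$ terms on the right-hand side. The delicate point here is that $u$ inside $M_u^\e$ depends nonlinearly on $f$, so the log-Sobolev bound for $\mathcal{N}_{FP}$ has to be applied with care, and part of $\intr e\cdot(\bar u-u)\,dx$ must be absorbed into the kinetic dissipation.

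Finally, I would close by Gronwall. The two cases differ only in the available global integrability of $|v|^2 f$ and of $V\rho$: with the quadratic confinement $V=|x|^2/2$, $\mathcal{F}_\e(f_0)$ provides a uniform bound on all moments and remainders decay at rate $\e^\delta$, whereas without confinement one pays an extra $\e^{-1}$ in the Poisson energy piece and must use $E_\e(f_0)$ from \eqref{def_e}, which shifts the exponents to those stated in case (ii). The sharper second inequality in each case follows from the first by multiplying through by $\e$, thanks to the $\e^{-1}$ weights built into the modulated entropy and Poisson pieces of $\sfH_\e$; the $e^{2\e^\delta T}$ factor on $\mathcal{F}_\e(f_0)$ in case (i) reflects the Gronwall absorption of the $\sfH_\e$ term on the right-hand side.
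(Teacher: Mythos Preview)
Your overall architecture---rewriting \eqref{main_conti} as an isothermal Euler system with consistency error $\bar e$, then differentiating $\sfH_\e$ and using the continuity equations to cancel the pressure and Poisson cross terms, closing by Gr\"onwall---matches the paper's Proposition \ref{rel_prop}. But two of your steps miss the technical points that the paper singles out as its main contributions.

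\textbf{The $\bar e$ cross term is the real gap.} You write that ``$\bar e$ is bounded uniformly in $\e$ by norms of $\bar\rho$, in the spirit of the remainders in \cite{LT17,CPW20}.'' This is exactly what fails in the confinement case: since $\bar u = -x - \na\bar\Phi - \na\log\bar\rho$, we have $\bar u \notin L^\infty$, so $\bar e/\bar\rho = \pa_t\bar u + \bar u\cdot\na\bar u$ cannot be bounded in $L^\infty$ and the Cauchy--Schwarz argument of \cite{LT17,CPW20} does not close (this is precisely Remark \ref{rmk_gd}). The paper instead expands $\pa_t\bar u$ and $\bar u\cdot\na\bar u$ componentwise using the explicit form of $\bar u$ and discovers a cancellation: the dangerous term $-\intr\rho(u-\bar u)\cdot\bar u\,dx$ is rewritten as $\intr\rho|u-\bar u|^2\,dx - \intr\rho(u-\bar u)\cdot u\,dx$, and the second piece is controlled by $(\intr\rho|u|^2\,dx)^{1/2}$, which is bounded via the free energy (Lemma \ref{lem_ent}). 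The remaining terms involve only $\na\bar u$, $\na^2\bar u$, and the commutator $\intrr\rho(x)(u-\bar u)(x)\pa_{ij}K(x-y)\bar\rho(y)(\bar u_i(y)-\bar u_i(x))\,dxdy$, each of which is bounded by $\|\na\bar u\|_{L^\infty}$ and $\|\bar\rho\|_{L^1\cap L^\infty}$. This is Lemma \ref{lem_ee}, and it is the reason the paper requires $\na\log\bar\rho \in W^{2,\infty}$ in Definition \ref{def_strong}.

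\textbf{The moment-closure error $e$ is handled more simply than you propose.} You suggest rewriting $e$ via $f-\rho M_u^\e$ and invoking a CKP/log-Sobolev inequality. The paper instead integrates $\intr\bar u\cdot e\,dx$ by parts (so only $\na\bar u$ appears), then uses a direct Cauchy--Schwarz bound from \cite{KMT15}:
\[
\left|\intr\bar u\cdot e\,dx\right| \le C\|\na\bar u\|_{L^\infty}\left(\intrr f|v|^2\,dxdv\right)^{1/2}\mathcal{D}_\e(f)^{1/2}.
\]
After Young's inequality this leaves $C\e^{2+\delta}\intrr f|v|^2\,dxdv$ plus $\tfrac{1}{2\e^{2+\delta}}\mathcal{D}_\e(f)$; the latter is absorbed not by log-Sobolev but by the free-energy balance of Lemma \ref{lem_ent}, packaged together with the mesoscopic-vs-macroscopic entropy defect $\intr(\mk_\e(f_0)-\me_\e(U_0))\,dx = M_0 + \e^{-1}\bar M_0$ via the minimization principle $\intr\me_\e(U)\,dx\le\intr\mk_\e(f)\,dx$. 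This is where $M_0$ and $\bar M_0/\e$ enter, not from a separate comparison against the local Maxwellian entropy.

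Finally, your description of the two cases is inverted in spirit. In case (i) the confinement lets you control the \emph{negative} part of $\intrr f\log f$ via \eqref{flogf-1}, yielding $\intrr f|v|^2 \le 4\mathcal{F}_\e(f) + C\e^{-d/2-1}$ (hence the $\e^{\delta-d/2-1}$ term), and the $e^{2\e^\delta T}$ factor comes from Gr\"onwall on $\intrr f|v|^2$, not on $\sfH_\e$. In case (ii), $\bar u\in L^\infty$ since $V\equiv0$, so one bounds $\intr\rho|u|^2\,dx$ by $\intr\rho|u-\bar u|^2\,dx$ plus a constant, and the remaining $\int_0^t\intrr f|v-u|^2$ is controlled by the plain energy balance \eqref{entropy-ff}, giving $E_\e(f_0)$ instead of $\mathcal{F}_\e(f_0)$.
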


\begin{corollary}\label{cor_main}

Suppose that all the assumptions in Theorem \ref{thm_main} hold. Furthermore we assume that 
\[
\intr \lt(\intr f_0 \frac{|v|^2}{2}\,dv - \rho_0|u_0|^2\rt)dx \leq C
\]
and
\[
\intr \int_{\bar \rho_0}^{\rho_0} \frac{\rho_0 - z}{z}\,dzdx +  \intr  \lt(\intr f_0\log f_0\,dv - \rho_0 \log\rho_0\rt)dx + \intr|\na (\Phi_0 - \bar\Phi_0)|^2\,dx \leq C\e^\zeta
\]
for some $C>0$ independent of $\e>0$, where $\zeta$ is given by
\[
\zeta := \left\{ \begin{array}{ll}
 \min\{1,\delta-d/2\} & \textrm{for $V(x) = |x|^2/2$}\\[2mm]
 \min\{1,\delta\}  & \textrm{for $V \equiv 0$}
  \end{array} \right..
\]
Then we have
\[
\intr \int_{\bar \rho}^{\rho} \frac{\rho - z}{z}\,dz\,dx  +   \intr|\na (\Phi - \bar\Phi)|^2\, dx  +   \int_0^t \intr \rho|u-\bar u|^2\,dxds \leq C\e^\zeta
\]
for $t < T$, where $C>0$ is independent of $\e$. In particular, if $\zeta >0$, then the following strong convergences hold:
\[
\rho \to \bar\rho \quad \mbox{in} \quad L^\infty(0,T;L^1 \cap H^{-1}(\R^d))
\]
and
\[
\rho u \to \bar \rho \bar u \quad \mbox{in} \quad L^2(0,T;L^1(\R^d))
\]
as $\e \to 0$.
\end{corollary}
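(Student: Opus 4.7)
The plan is to deduce the corollary directly from Theorem \ref{thm_main} combined with three standard tools: the Csisz\'ar--Kullback--Pinsker inequality for the $L^1$ convergence, the Poisson equation for the $H^{-1}$ convergence, and a Cauchy--Schwarz decomposition for the momentum convergence. The bulk of the work is bookkeeping of $\e$-powers; no new estimates on either the kinetic equation or the limit equation are needed.

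First I would substitute the initial-data hypotheses into the ``in particular'' bounds of Theorem \ref{thm_main} in each case, and verify that every summand on the right-hand side is $\leq C\e^\zeta$. Unpacking $\sfH_\e(0)$ from its definition, the two pieces carrying the $1/\e$ prefactor (the initial relative entropy of $\rho_0$ against $\bar\rho_0$ and $\|\nabla(\Phi_0-\bar\Phi_0)\|_{L^2}^2$) are together $\leq C\e^{\zeta-1}$ by assumption, so their contribution to $C\e\sfH_\e(0)$ is $O(\e^\zeta)$, while the remaining kinetic piece $\tfrac{1}{2}\intr \rho_0|u_0-\bar u_0|^2\,dx$ is $O(1)$ under the bound on $M_0$ combined with the regularity of $\bar u_0$ inherited from Definition \ref{def_strong}. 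The terms $C\e M_0$, $C\bar M_0$, $C\e^2$, $C\e^{\delta-d/2}$, $C\e^\delta$, and $C\e$ are each dominated by $C\e^\zeta$ by the very choice of $\zeta$. Finally, under the hypotheses the kinetic, confining, and interaction energies are all $O(1)$, so $\mathcal{F}_\e(f_0)$ and $E_\e(f_0)$ are $O(1/\e)$; hence $C\e^{\delta+1}\mathcal{F}_\e(f_0)\leq C\e^\delta$ in case (i) and $C\e^{3+2\delta}E_\e(f_0)\leq C\e^{2+2\delta}$ in case (ii), both $\leq C\e^\zeta$. Collecting all the terms yields the first displayed estimate of the corollary.

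Assuming $\zeta>0$, I would then extract the three convergence statements. Noting that $\int_{\bar\rho}^{\rho}\frac{\rho-z}{z}\,dz = \rho\log(\rho/\bar\rho) - \rho + \bar\rho$ is the standard Boltzmann relative entropy between probability densities of unit mass, the Csisz\'ar--Kullback--Pinsker inequality gives $\|\rho-\bar\rho\|_{L^1}^2 \leq C\intr\bigl(\rho\log(\rho/\bar\rho) - \rho + \bar\rho\bigr)\,dx \leq C\e^\zeta$ uniformly in $t\in[0,T]$. For the negative Sobolev part, the Poisson identity $-\Delta(\Phi-\bar\Phi) = \rho - \bar\rho$ gives $\|\rho-\bar\rho\|_{\dot H^{-1}}^2 = \|\nabla(\Phi-\bar\Phi)\|_{L^2}^2 \leq C\e^\zeta$, which combined with the $L^1$ bound produces the convergence in $L^\infty(0,T;L^1\cap H^{-1})$. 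For the momentum, I would split
\[
\rho u - \bar\rho\bar u = \rho(u-\bar u) + (\rho-\bar\rho)\bar u;
\]
Cauchy--Schwarz and mass conservation give $\|\rho(u-\bar u)\|_{L^1}^2 \leq \intr \rho|u-\bar u|^2\,dx$, whose time integral is bounded by $C\e^\zeta$, while the second term is handled using the closed form $\bar\rho\bar u = -\bar\rho\nabla V - \bar\rho\nabla\bar\Phi - \nabla\bar\rho$ together with the regularity provided by Definition \ref{def_strong}.

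The main technical obstacle I anticipate is the treatment of $(\rho-\bar\rho)\bar u$ in the confined case $V(x) = |x|^2/2$, where $\bar u$ contains the unbounded linear drift $x$. Resolving this requires combining the just-established $L^1$ convergence of $\rho-\bar\rho$ with uniform-in-$\e$ second-moment bounds for both $\rho$ and $\bar\rho$ propagated by the free-energy dissipation structure of the respective equations, so that the product of $\rho-\bar\rho$ with a linearly growing function still lies in $L^2(0,T;L^1)$.
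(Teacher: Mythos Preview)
Your proposal is correct and follows essentially the same route as the paper. Two cosmetic differences: the paper omits the $\e$-power bookkeeping you carry out in the first paragraph and jumps straight to the convergence statements; and for the $L^1$ convergence the paper does not cite Csisz\'ar--Kullback--Pinsker by name but instead proves the equivalent inequality directly via the Taylor lower bound $\int_{\bar\rho}^{\rho}\frac{\rho-z}{z}\,dz \geq \tfrac12\min\{1/\rho,1/\bar\rho\}(\rho-\bar\rho)^2$ followed by the Cauchy--Schwarz trick $\bigl(\intr|\rho-\bar\rho|\,dx\bigr)^2 \leq \bigl(\intr(\rho+\bar\rho)\,dx\bigr)\bigl(\intr\min\{1/\rho,1/\bar\rho\}(\rho-\bar\rho)^2\,dx\bigr)$. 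Your handling of the $H^{-1}$ convergence and the momentum splitting, including the identification of the unbounded drift $\nabla V = x$ as the main obstacle and its resolution via the weighted bound $\intr|\rho-\bar\rho||\nabla V|\,dx \leq C\|\rho-\bar\rho\|_{L^1}^{1/2}\bigl(\intr(\rho+\bar\rho)(1+V)\,dx\bigr)^{1/2}$, matches the paper exactly.
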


\begin{remark} One may extend our result to the case with confinement potentials $V$ satisfying
\[
e^{-V} \in L^1(\R^d), \quad |\nabla^2 V(x)| \leq c_1, \quad \mbox{and} \quad |\nabla V(x)|^2 \leq c_2(1 + V(x)) \quad \mbox{for } x \in \R^d,
\] 
for some $c_i > 0$, $i=1,2$. It is clear that the quadratic confinement potential $V(x) = |x|^2/2$ satisfies the above inequality with $c_1 = 1$ and $c_2 = 2$. Our strategy can be easily extended to the case that $\Phi$ is given as $\nabla \tilde K \star \rho$, where the interaction potential $\tilde K$ satisfying $\nabla \tilde K \in L^\infty(\R^d)$. See Remark \ref{rmk_linf} for details.
\end{remark}

%%%%%%%%%%%%%%%%%%%%%%%%%%%%%%%%%%%%%%%%%%%%%%%%%%%%%%%%%%%%%%%%%%%%%%%%%%%%%%%%%5
%
%
%                        \subsection{Organization of paper}
%
%
%%%%%%%%%%%%%%%%%%%%%%%%%%%%%%%%%%%%%%%%%%%%%%%%%%%%%%%%%%%%%%%%%%%%%%%%%%%%%%%%%
\subsection{Organization of paper}
The rest of this paper is organized as follows. In Section \ref{sec:rel}, we provide a free energy estimate and introduce our main functional, relative entropy functional. Section \ref{sec:main} is devoted to prove our main results; Theorem \ref{thm_main} and Corollary \ref{cor_main}. Finally, in Section \ref{sec:bdd}, we present some bound estimates for the solution $\bar\rho$ to \eqref{main_conti} which are required for our quantitative error estimates.

%%%%%%%%%%%%%%%%%%%%%%%%%%%%%%%%%%%%%%%%%%%%%%%%%%%%%%%%%%%%%%%%%%%%%%%%%%%%%%%%%5
%
%
%                       \section{Entropy}
%
%
%%%%%%%%%%%%%%%%%%%%%%%%%%%%%%%%%%%%%%%%%%%%%%%%%%%%%%%%%%%%%%%%%%%%%%%%%%%%%%%%%
%%%%%%%%%%%%%%%%%%%%%%%%%%%%%%%%%%%%%%%%%%%%%%%%%%%%%%%%%%%%%%%%%%%%%%%%%%%%%%%%%5
%
%
%                       \section{Relative entropy}
%
%
%%%%%%%%%%%%%%%%%%%%%%%%%%%%%%%%%%%%%%%%%%%%%%%%%%%%%%%%%%%%%%%%%%%%%%%%%%%%%%%%%
\section{Preliminaries}\label{sec:rel}

\subsection{Free energy estimate}
Let us first introduce free energy $\mf_\e$ for the system \eqref{main_kin} and its associated dissipation $\md_\e$:
\[
\mathcal{F}_\e(f) := \frac1\e \intrr f\log f\,dxdv + \intrr f \frac{|v|^2}{2}\,dxdv + \frac1\e \intr \rho V\,dx + \frac{1}{2\e} \intr \Phi \rho\,dx
\]
and
\[
\mathcal{D}_\e(f) := \intrr \frac{1}{f} \lt|\frac1\e \nabla_v f - (u-v)f\rt|^2\,dxdv.
\]

\begin{lemma}\label{lem_ent}Let $T>0$ and $f$ be a solution of \eqref{main_kin} with sufficient integrability on the time interval $[0,T]$. Then we have
\[
\mathcal{F}_\e(f)  +   \int_0^t\left(\f{1}{\ep^{2+\delta}}\mathcal{D}_\e(f)   +  \f{1}{\ep}\intrr f|v|^2dxdv\right)ds    \le  \mathcal{F}_\e(f_0)  + d \e^{-2}t.
\]
for $t \in [0,T]$. Furthermore, we obtain
\[
\mathcal{F}_\e(f)  +   \int_0^t\left(\f{1}{\ep^{2+\delta}}\mathcal{D}_\e(f)   +  \f{1}{\ep}\intr\rho|u|^2\,dx\right)ds    \le  \mathcal{F}_\e(f_0)  +  \f{\e^{\delta}}{2}\int_0^t\intrr f|v|^2\,dxdvds.
\]
Here $\rho$ and $u$ are defined as in \eqref{def_ru}.
\end{lemma}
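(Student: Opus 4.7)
The plan is to differentiate $\mf_\e(f)$ in time by testing the VPFP equation \eqref{main_kin} against $\frac{1}{\e}(\log f + 1)$ and $\frac{|v|^2}{2}$ and adding the time derivatives of $\frac{1}{\e}\intr\rho V\,dx$ and $\frac{1}{2\e}\intr\rho\Phi\,dx$. After integration by parts in $v$, the transport $v\cd\na_x f$ drops out upon integration in $x$; the friction--potential drift $\frac{1}{\e}\na_v\cd((v+\na V+\na\Phi)f)$ contributes the constant source $+\frac{d}{\e^2}$ to the entropy part via the identity $\intrr v\cd\na_v f\,dxdv = -d$, together with the cross terms $-\frac{1}{\e}\intr\rho u\cd(\na V+\na\Phi)\,dx$ on the kinetic part; and the Fokker--Planck term contributes $-\frac{1}{\e^{4+\delta}}\intrr|\na_v f|^2/f + \frac{d}{\e^{3+\delta}}$ on the entropy side and $-\frac{1}{\e^{2+\delta}}\intrr|v|^2 f + \frac{1}{\e^{2+\delta}}\intr\rho|u|^2 + \frac{d}{\e^{3+\delta}}$ on the kinetic side. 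The confinement and Poisson energies are differentiated using the continuity equation $\pa_t\rho + \na\cd(\rho u) = 0$ (obtained by integrating \eqref{main_kin} in $v$) together with the symmetry of the Coulomb kernel, yielding exactly $\frac{1}{\e}\intr\rho u\cd\na V + \frac{1}{\e}\intr\rho u\cd\na\Phi$, which cancels the drift cross terms.

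Next, the purely algebraic identity
\[
\md_\e(f) = \frac{1}{\e^2}\intrr\frac{|\na_v f|^2}{f}\,dxdv - \frac{2d}{\e} + \intrr|v-u|^2 f\,dxdv,
\]
obtained by expanding the square in the definition of $\md_\e$ and using $\intrr\na_v f\cd(v-u)\,dxdv = -d$, together with $\intrr|v-u|^2 f = \intrr|v|^2 f - \intr\rho|u|^2$, lets me rewrite the $|\na_v f|^2/f$ contribution as $\frac{1}{\e^{2+\delta}}\md_\e$ modulo terms that cancel with the leftover $\intr\rho|u|^2$ and $d/\e^{3+\delta}$ pieces. After these cancellations I arrive at the clean identity
\[
\frac{d}{dt}\mf_\e(f) + \frac{1}{\e^{2+\delta}}\md_\e(f) + \frac{1}{\e}\intrr f|v|^2\,dxdv = \frac{d}{\e^2},
\]
whose integration on $[0,t]$ yields the first estimate.

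For the second estimate, I split $\intrr f|v|^2\,dxdv = \intr\rho|u|^2\,dx + T$ with $T := \intrr|v-u|^2 f\,dxdv$ and handle the source $\frac{d}{\e^2}$ by exploiting the Maxwellian relation $\na_v\log M^\e_u = -\e(v-u)$: rewriting $d = -\intrr\na_v f\cd(v-u) = -\intrr f(v-u)\cd\na_v\log(f/M^\e_u) + \e T$ and applying Cauchy--Schwarz gives $|d - \e T|^2 \le \e^2 T\,\md_\e(f)$, since $\intrr f|\na_v\log(f/M^\e_u)|^2 = \e^2\md_\e(f)$. A Young's inequality with $\e$-dependent parameter of order $\e^{1+\delta}$ then bounds $\frac{d}{\e^2}$ by $\frac{T}{\e} + \frac{\e^\delta}{2}T$ plus a fraction of $\frac{1}{\e^{2+\delta}}\md_\e$ that is absorbed by the dissipation already on the left-hand side of the identity; using $T \le \intrr f|v|^2\,dxdv$ then delivers the second estimate. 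The main subtlety is budgeting the $\e$-powers so that the constant friction-driven entropy production $\frac{d}{\e^2}$ can be balanced against the Fokker--Planck dissipation $\frac{1}{\e^{2+\delta}}\md_\e$: this relies critically on the fact that $\na_v\log M^\e_u = -\e(v-u)$ encodes the correct Maxwellian equilibrium temperature $d/\e$, which is precisely what makes the Cauchy--Schwarz bound $|d - \e T|\le\e\sqrt{T\md_\e}$ sharp in the right $\e$-scaling.
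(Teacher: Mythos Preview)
Your proof is correct and follows essentially the same route as the paper. The derivation of the key identity $\frac{d}{dt}\mf_\e(f) + \frac{1}{\e^{2+\delta}}\md_\e(f) + \frac{1}{\e}\intrr f|v|^2\,dxdv = \frac{d}{\e^2}$ is the same integration-by-parts computation (you expand $\md_\e$ into Fisher information plus $\intrr f|v-u|^2$ and then reassemble, while the paper keeps the combination $(v-u)f + \frac{1}{\e}\nabla_v f$ intact throughout); and for the second assertion your Cauchy--Schwarz/Young argument via $\nabla_v\log(f/M^\e_u)$ is exactly the paper's argument in different notation, since $f\nabla_v\log(f/M^\e_u) = \e\bigl(\frac{1}{\e}\nabla_v f - (u-v)f\bigr)$ --- the paper simply writes $\frac{d}{\e^2} = \frac{1}{\e}\intrr v\cdot\bigl((u-v)f - \frac{1}{\e}\nabla_v f\bigr)\,dxdv - \frac{1}{\e}\intrr v\cdot(u-v)f\,dxdv$ and applies Cauchy--Schwarz to the first piece, which is your bound with $\sqrt{\intrr f|v|^2}$ in place of your slightly sharper $\sqrt{T}$.
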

\begin{proof} Using equation \eqref{main_kin}, we can directly compute 
\begin{align}\label{entropy-f-1}
\begin{aligned}
\f{d}{dt}\intrr f\log f\, dxdv
&= \intrr (1 + \log f)\, \pa_t f\, dxdv    \cr
&= -  \f{1}{\ep}\intrr \f{1}{f}\na_v f\cd\Big(v    +   (\na V  +  \na \Phi)\Big)\,f\, dxdv   \cr
&\quad \,  -  \f{1}{\ep^{2+\delta}}\intrr \f{1}{f}\na_v f\cd \Big((v-u)f   +  \f{1}{\ep}\na_v f\Big)\,dxdv     \cr
&= \f{d}{\ep}    -   \f{1}{\ep^{2+\delta}}\intrr\f{1}{f}\na_v f\cd \Big((v-u)f   +  \f{1}{\ep}\na_v f\Big)\,dxdv.
\end{aligned}
\end{align}
Similarly, we deduce 
\begin{align*}
\f{d}{dt}\intrr f\f{|v|^2}{2}dxdv &=  \intrr \pa_t f \f{|v|^2}{2} dxdv   \cr
&=  -\f{1}{\ep}\intrr f|v|^2dxdv    -   \f{d}{dt}\left(\f{1}{\ep}\intr \rho V dx
   +  \f{1}{2\ep}\intr\Phi\rho\,dx  \right)     \cr
&\quad -\f{1}{\ep^{2+\delta}}\intrr f|v-u|^2dxdv    -   \f{1}{\ep^{3+\delta}}\intrr(v-u)\cd\na_v f\, dxdv,
\end{align*}
which subsequently implies 
\begin{align}\label{entropy-f-2}
\begin{aligned}
&\f{d}{dt}\left(\intrr f\f{|v|^2}{2}\,dxdv   +   \f{1}{\ep}\intr\rho V\, dx
   +  \f{1}{2\ep}\intr\Phi\rho\,dx  \right)     \cr
&\quad  =  - \f{1}{\ep}\intrr f|v|^2\,dxdv -\f{1}{\ep^{2+\delta}}\intrr f|v-u|^2\,dxdv    \cr
&\qquad - \f{1}{\ep^{3+\delta}}\intrr (v-u)\cd\na_v f\, dxdv.
\end{aligned}
\end{align}
Dividing \eqref{entropy-f-1} by $\ep$ and adding the resulting equation to \eqref{entropy-f-2}, we get
\bq\label{entropy-f-3}
\f{d}{dt}\mathcal{F}_\e(f) +   \f{1}{\ep}\intrr f|v|^2dxdv
 +  \f{1}{\ep^{2+\delta}}\intrr \f{1}{f}\left|\f{1}{\ep}\na_v f - (u-v)f \right|^2 dxdv    =   \f{d}{\ep^2}.
\eq
Integrating the above with respect to time gives the first assertion. 

One can further estimate
\begin{align}\label{entropy-f-4}
\begin{aligned}
\f{d}{\ep^2} & =  \f{1}{\ep}\intrr v\cd \left((u-v)f  -  \f{1}{\ep}\na_v f\right)dxdv
    -\f{1}{\ep}\intrr v\cd(u-v)f\,dxdv   \cr
&\le \f{1}{\ep}\left(\intrr f|v|^2\,dxdv\right)^{1/2}
   \left(\intrr \f{1}{f}\left|\f{1}{\ep}\na_v f-(u-v)f\right|^2dxdv\right)^{1/2}\cr
   &\quad
   -\f{1}{\ep}\intr \rho|u|^2\,dx   +   \f{1}{\ep}\intrr f|v|^2 \,dxdv     \cr
&\le  \f{1}{2\ep^{2+\delta}}\mathcal{D}_\e(f) 
    +    \f{\e^{\delta}}{2}\intrr f|v|^2\,dxdv  -\f{1}{\ep}\intr \rho|u|^2dx   +   \f{1}{\ep}\intrr f|v|^2 \,dxdv.
\end{aligned}
\end{align}
Substituting \eqref{entropy-f-4} into \eqref{entropy-f-3}, we have
\begin{align*}
\f{d}{dt}\mathcal{F}_\e(f)   +   \f{1}{2\ep^{2+\delta}}\mathcal{D}_\e(f)   +  \f{1}{\ep}\intr\rho|u|^2\,dx
\le      \f{\e^{\delta}}{2}\intrr f|v|^2\,dxdv
\end{align*}
from which, we obtain the second assertion.
\end{proof}

\begin{remark} In \cite{CCJpre}, it is showed that the weak solutions $f$ of \eqref{main_kin} in the sense of Definition \ref{def_weak} satisfies the entropy inequality appeared in Lemma \ref{lem_ent}.
\end{remark}

\subsection{Relative entropy}

Note that the equation \eqref{main_conti} can be also rewritten as
\begin{align}\label{euler}
\begin{aligned}
&\pa_t\bar\rho + \nabla \cdot (\bar\rho \bar u) = 0,\cr
&\pa_t (\bar \rho \bar u) + \nabla \cdot (\bar \rho \bar u\otimes \bar u) + \frac1\e \nabla \bar \rho = -\frac1\e \bar\rho \bar u - \frac1\e\bar \rho(\nabla V + \nabla \bar\Phi) + \bar e,\cr
&-\Delta \bar\Phi = \bar\rho,
\end{aligned}
\end{align}
where $\bar e = \bar\rho (\pa_t \bar u + \bar u \cdot \nabla \bar u)$.
Let us rewrite the system \eqref{euler} as a conservative form:
\[
\pa_t \bar U + \nabla \cdot A_{\e}(\bar U) = F_{\e}(\bar U),
\]
where
\[
\bar m = \bar \rho \bar u, \quad \bar U := \begin{pmatrix}
\bar \rho \\
\bar m
\end{pmatrix},
\quad
A_\e(\bar U) := \begin{pmatrix}
\bar m  & 0 \\
(\bar m \otimes \bar m)/\bar \rho & \bar \rho /\e
\end{pmatrix},
\]
and
\[
F_\e(\bar U) := \begin{pmatrix}
0 \\
\displaystyle -\frac1\e \bar\rho \bar u - \frac1\e\bar \rho(\nabla V + \nabla \bar\Phi) + \bar e
\end{pmatrix}.
\]
Then the above system has the following macroscopic entropy form:
\[
\me_\e(\bar U) := \frac{|\bar m|^2}{2\bar\rho} + \frac1\e\bar\rho \log \bar\rho.
\]
We now define the relative entropy functional $\mh_\e$ as follows.
\[%\bq\label{def_rel}
\mh_\e(U|\bar U) := \me_\e( U) - \me_\e(\bar U) - D\me_\e(\bar U)( U-\bar U) \quad \mbox{with} \quad U := \begin{pmatrix}
        \rho \\
         m \\
    \end{pmatrix}, \quad  m = \rho  u,
\]%\eq
where $D \me_\e(\bar U)$ denotes the derivation of $\me_\e$ with respect to $\bar\rho, \bar m$, and we find
\begin{align*}
-D\me_\e(\bar U)( U - \bar U) &= -\begin{pmatrix}
\displaystyle        -\frac{|\bar m|^2}{2\bar\rho^2} + \frac1\e(\log \bar \rho + 1)\\[3mm]
\displaystyle        \frac{\bar m}{\bar\rho}
    \end{pmatrix}
    \begin{pmatrix}
    \rho -\bar \rho \\
    m - \bar m
    \end{pmatrix}\\
    &= \frac{\rho |\bar u|^2}{2} - \frac{\bar \rho|\bar u|^2}{2} + \frac1\e(\bar\rho - \rho)(\log \bar\rho + 1) + \bar\rho |\bar u|^2 - \rho \bar u \cdot u.
\end{align*}
This yields
\[
\mh_\e(U| \bar U)  = \frac{\rho}{2}|u - \bar u|^2 + \frac1\e p(\rho| \bar \rho),
\]
where $p$ represents the relative entropy which is defined by
\[%\bq\label{est_p}
p(\rho| \bar  \rho) := \int_{\bar \rho}^{\rho} \frac{\rho - z}{z}\,dz. 
\]%\eq

%%%%%%%%%%%%%%%%%%%%%%%%%%%%%%%%%%%%%%%%%%%%%%%%
%
%
%
% \section{Proofs of Theorem \ref{thm_main} and Corollary \ref{cor_main}}
%
%
%%%%%%%%%%%%%%%%%%%%%%%%%%%%%%%%%%%%%%%%%%%%%%%%
\section{Proofs of Theorem \ref{thm_main} \& Corollary \ref{cor_main}}\label{sec:main}

\subsection{Relative entropy estimate}
\begin{proposition}\label{rel_prop}Let $T>0$. Let $f$ be the solution to the equation \eqref{main_kin} in the sense of Definition \ref{def_weak} and $\bar\rho$ be the strong solution to the equation \eqref{main_conti} in the sense of Definition \ref{def_strong} on the time interval $[0,T]$. Then for $\e \in (0,1)$ small enough we have
\begin{align}\label{rel}
\begin{aligned}
&\f12\intr\rho|u-\bar{u}|^2\,dx  +   \f{1}{\e}\intr p(\rho|\bar{\rho})\,dx  +  \frac{1}{2\e} \int_0^t \intr \rho|u-\bar u|^2\,dxds
  +   \f{1}{2\e}\intr|\nabla (\Phi - \bar\Phi)|^2 \,dx \cr
 &\quad \leq C\intr\rho_0|u_0-\bar{u}_0|^2\,dx  +   \f{C}{\e}\intr p(\rho_0|\bar{\rho}_0)\,dx +C\intr \lt(\mk_\e(f_0) - \me_\e(U_0)\rt)dx \cr
 &\qquad  \,\,+ C\e^{\delta}\int_0^t\intrr f|v|^2\,dxdv ds +    C\e +  \f{C}{\e}\intr|\nabla (\Phi_0 - \bar\Phi_0)|^2\,dx,
\end{aligned}
\end{align}
where $C>0$ is independent of $\e$.
\end{proposition}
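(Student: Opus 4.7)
Plan.

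The strategy is a modulated macroscopic kinetic energy computation for $U = (\rho, \rho u)$ against $\bar U = (\bar\rho, \bar\rho\bar u)$, combined with the modulated Poisson energy. Specifically, I would differentiate
\[
\sfH_\e(t) = \intr \mh_\e(U|\bar U)\,dx + \f{1}{2\e}\intr|\na(\Phi - \bar\Phi)|^2\,dx
\]
in time. For $U$, I use the local conservation laws \eqref{local_cons} with kinetic error $e$ from \eqref{eq_e}; for $\bar U$, I use the conservative reformulation \eqref{euler} with macroscopic error $\bar e = \bar\rho(\pa_t\bar u + \bar u\cd\na\bar u)$. Expanding $\mh_\e(U|\bar U) = \me_\e(U) - \me_\e(\bar U) - D\me_\e(\bar U)(U-\bar U)$ and differentiating term by term, the friction $-\f{1}{\e}\rho u$ in the momentum equation produces the clean dissipation $-\f{1}{\e}\intr\rho|u-\bar u|^2\,dx$; half of this I keep on the left of \eqref{rel}, and half I reserve for absorbing error terms. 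The rest splits into a convective piece $-\intr (\na\bar u):\rho(u-\bar u)\otimes(u-\bar u)\,dx$ (controlled by $\|\na\bar u\|_{L^\infty}$ from Definition \ref{def_strong}), Poisson cross terms, pressure and confinement contributions, and the two error-pairings $\intr e\cd(\bar u - u)\,dx$ and $\intr \bar e\cd(\bar u - u)\,dx$.

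The Poisson cross terms are handled via the identity $\f{d}{dt}\f{1}{2\e}\intr|\na(\Phi-\bar\Phi)|^2\,dx = \f{1}{\e}\intr(\Phi-\bar\Phi)\pa_t(\rho-\bar\rho)\,dx$ and the continuity equations for $\rho$ and $\bar\rho$; after integration by parts this reproduces (up to sign) precisely the Poisson cross terms, so those contributions to $\f{d}{dt}\sfH_\e$ collapse. For the pressure and confinement parts, I would invoke the pointwise identity $\bar u = -\na V - \na\bar\Phi - \na\log\bar\rho$ from \eqref{main_conti} to rewrite the forcing $-\f{1}{\e}\bar\rho\na V - \f{1}{\e}\bar\rho\na\bar\Phi - \f{1}{\e}\na\bar\rho$ in the $\bar U$-equation in terms of $\bar u$; this combines with the time derivative of $\f{1}{\e}\intr p(\rho|\bar\rho)\,dx$ to produce the key cancellation mentioned in the introduction, which is what allows the treatment of the quadratic confinement $V(x) = |x|^2/2$ despite the fact that $\bar u$ is then unbounded. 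What survives involves only $\rho - \bar\rho$ and its derivatives, controlled pointwise by $p(\rho|\bar\rho)$ via convexity.

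For the two error-pairings, the kinetic one is treated by integration by parts in $x$, $\intr e\cd(\bar u - u)\,dx = -\intr M : \na(\bar u - u)\,dx$ with $M_{ij}(x) = \f{\rho}{\e}\delta_{ij} - \intr(v-u)_i(v-u)_j f\,dv$; integration by parts in $v$ yields the pointwise identity
\[
M_{ij}(x) = -\f{1}{\e}\intr (v-u)_i\,f\,\pa_{v_j}\log(f/M^\e_u)\,dv,
\]
so Cauchy--Schwarz gives $|M(x)| \leq \f{1}{\e}\bigl(\intr|v-u|^2 f\,dv\bigr)^{1/2}\bigl(\intr f|\na_v\log(f/M^\e_u)|^2\,dv\bigr)^{1/2}$; pairing with $\na\bar u \in L^\infty$, applying Young's inequality with an appropriate $\e$-weight, and invoking the second inequality of Lemma \ref{lem_ent} produces the $C\e^\delta\int_0^t\intrr f|v|^2\,dxdvds$ term of \eqref{rel}, together with a multiple of $\f{1}{\e^{2+\delta}}\md_\e(f)$ that is absorbed and an initial-data contribution $\intr[\mk_\e(f_0) - \me_\e(U_0)]\,dx$. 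For the macroscopic error, the regularity $\na\log\bar\rho \in L^\infty(0,T;W^{2,\infty})$ from Definition \ref{def_strong} yields $L^\infty$-bounds on $\pa_t\bar u + \bar u\cd\na\bar u$; a weighted Cauchy--Schwarz against $\sqrt{\bar\rho}$, after splitting $\bar\rho = \rho + (\bar\rho-\rho)$ so that the dominant piece pairs with the retained half of the dissipation via Young's inequality, yields the $C\e$ term in \eqref{rel}. The main obstacle is precisely the unbounded $\na V$ in the quadratic-confinement case, which forces us through the cancellation structure and the higher-order regularity assumed on $\bar\rho$; once this is in place, a Grönwall argument applied to the resulting differential inequality delivers \eqref{rel}.
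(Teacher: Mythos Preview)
Your overall architecture (modulated energy + Poisson energy, relative-entropy differentiation, free-energy input, Gr\"onwall) matches the paper, but two steps do not go through as you describe them.

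\textbf{The kinetic error pairing.} You write $\intr e\cdot(\bar u - u)\,dx = -\intr M:\nabla(\bar u - u)\,dx$ and then bound by ``pairing with $\nabla\bar u \in L^\infty$'', silently dropping the $\nabla u$ contribution. But $u$ is built from a weak solution $f$ and carries no Sobolev regularity, so $\intr M:\nabla u\,dx$ is not controlled; the integration by parts is not even justified. The paper avoids this by \emph{not} differentiating $\intr\me_\e(U)\,dx$ along the macroscopic conservation laws: it writes $\intr\me_\e(U)\,dx - \intr\me_\e(U_0)\,dx$ and bounds it above via the minimization principle $\intr\me_\e(U)\,dx \leq \intr\mk_\e(f)\,dx$ together with the kinetic free-energy inequality of Lemma~\ref{lem_ent}. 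Under that route only $-\intr\bar u\cdot e\,dx$ appears (it comes from $D\me_\e(\bar U)F_\e(U)$ in the term $I_5$), and the integration by parts needs only $\nabla\bar u$. Your mention of the initial contribution $\intr[\mk_\e(f_0)-\me_\e(U_0)]\,dx$ shows you have the minimization principle in mind, but that is inconsistent with also claiming the error pairing is $(\bar u - u)\cdot e$.

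\textbf{The macroscopic error $\bar e$.} You assert ``$L^\infty$-bounds on $\pa_t\bar u + \bar u\cdot\nabla\bar u$''. With $V(x) = |x|^2/2$ one has $\bar u_i\pa_i\bar u_j = -\bar u_j - \bar u_i(\pa_{ij}\bar\Phi + \pa_{ij}\log\bar\rho)$, so $\bar u\cdot\nabla\bar u$ contains $-\bar u$ and grows linearly; the claimed bound is false (this is exactly the point of Remark~\ref{rmk_gd}). The paper's Lemma~\ref{lem_ee} instead decomposes $\intr\rho(u-\bar u)\cdot(\pa_t\bar u + \bar u\cdot\nabla\bar u)\,dx$ into pieces $J_1,\dots,J_4$. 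The dangerous one, $J_1 = -\intr\rho(u-\bar u)\cdot\bar u\,dx$, is rewritten as $\intr\rho|u-\bar u|^2\,dx - \intr\rho(u-\bar u)\cdot u\,dx$ and closed via the bound $\intr\rho|u|^2\,dx \leq C$ from Lemma~\ref{lem_ent}. The remaining unbounded pieces in $\bar u\cdot\nabla\bar u$ (namely $-\bar u_i\pa_{ij}\bar\Phi$) are combined with the time-derivative term $\pa_{ij}K\star(\bar\rho\bar u_i)$ coming from $\pa_t\bar u$, yielding a commutator $\intrr\rho(x)(u_j-\bar u_j)(x)\pa_{ij}K(x-y)\bar\rho(y)(\bar u_i(y)-\bar u_i(x))\,dxdy$ that is bounded using $\|\nabla\bar u\|_{L^\infty}$ and the kernel estimate $|\nabla^2 K(x-y)|\lesssim|x-y|^{1-d}$. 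Your ``weighted Cauchy--Schwarz against $\sqrt{\bar\rho}$'' with a splitting $\bar\rho = \rho + (\bar\rho-\rho)$ does not produce this cancellation.
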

\begin{proof}
Straightforward computations yield
\begin{align*}
\begin{aligned}
\intr \mh_\e(U|\bar U)\,dx &= \intr \mh_\e(U_0|\bar U_0)\,dx + \intr  \me_\e(U)\,dx - \intr \me_\e(U_0)\,dx \cr
&\quad -  \int_0^t\intr \nabla (D\me_\e(\bar U)):A_\e(U|\bar U)\,dxds \cr
&\quad -  \int_0^t \intr D^2\me_\e(\bar U) F_\e(\bar U) (U - \bar U) + D\me_\e(\bar U) F_\e(U)\,dxds\cr
&=: \sum_{i=1}^5 I_i,
\end{aligned}
\end{align*}
where we easily estimate $I_4$ as
\[
I_4 \leq \|\nabla \bar u\|_{L^\infty}\int_0^t \intr \rho|u - \bar u|^2\,dxds.
\]
Set $\mk_\e$ the mesoscopic entropy:
\[
\mk_\e(f) := \intr f \frac{|v|^2}{2}\,dv + \frac1\e \intr f\log f\,dv.
\]
By the classical minimization principle, see \cite{KMT15}, we find
\[
\intr \me_\e(U)\,dx \leq \intr \mk_\e(f)\,dx,
\]
and this gives
\begin{align*}%\label{I2+I3}
I_2+I_3 &= \intr \me_\e(U)\,dx - \intr \mk_\e(f)\,dx + \intr \mk_\e(f)\,dx - \intr \mk_\e(f_0)\,dx\cr
&\quad  + \intr \mk_\e(f_0)\,dx - \intr \me_\e(U_0)\,dx\cr
&\leq \intr \mk_\e(f)\,dx - \intr \mk_\e(f_0)\,dx + \intr \mk_\e(f_0)\,dx - \intr \me_\e(U_0)\,dx.
\end{align*}
On the other hand, since
\[
F_\e(U) = \begin{pmatrix}
0 \\
\displaystyle -\frac1\e \rho  u - \frac1\e \rho(\nabla V + \nabla \Phi) +  e
\end{pmatrix},
\]
 where $e$ is appeared in \eqref{eq_e}, we get
\begin{align}\label{est_f}
\begin{aligned}
&-\intr D^2\me_\e(\bar U) F_\e(\bar U) (U - \bar U) + D\me_\e(\bar U) F_\e(U)\,dx\cr
&\quad = -\frac1\e \intr \rho|u-\bar u|^2\,dx + \frac1\e \intr \rho|u|^2\,dx + \frac1\e \intr \rho u \cdot \nabla V\,dx + \frac1\e \intr \rho u \cdot \nabla \Phi\,dx\cr
&\qquad + \frac1\e \intr \rho (u - \bar u) \cdot \nabla (\bar\Phi - \Phi)\,dx - \intr \rho( u - \bar u) \cdot \frac{\bar e}{\bar \rho}\,dx - \intr \bar u \cdot e\,dx\cr
&\quad = -\frac1\e \intr \rho|u-\bar u|^2\,dx + \frac1\e \intr \rho|u|^2\,dx + \frac1\e \frac{d}{dt}\intr \rho  V\,dx + \frac1{2\e}\frac{d}{dt} \intr \rho \Phi\,dx\cr
&\qquad + \frac1\e \intr \rho (u - \bar u) \cdot \nabla (\bar\Phi - \Phi)\,dx - \intr \rho( u - \bar u) \cdot \frac{\bar e}{\bar \rho}\,dx - \intr \bar u \cdot e\,dx.
\end{aligned}
\end{align}
We next recall from \cite[Lemma 5.1]{YP20}, see also \cite{CFGS17, LT13, LT17} that
\[%\begin{align}\label{Cou-1}
\f12\f{d}{dt}\intr\lt|\na (\Phi - \bar\Phi)\rt|^2dx
=\intr\nabla (\Phi - \bar\Phi)\cd(\rho u - \bar{\rho}\bar{u})\,dx.
\]%\end{align}
Thus, we obtain
\begin{align}\label{Cou-2}
\begin{aligned}
&\frac1\e \intr \rho (u - \bar u) \cdot \nabla (\bar\Phi - \Phi)\,dx   \cr
&\quad =-\f{1}{\ep}\intr\nabla (\Phi - \bar\Phi)\cd(\rho u - \bar{\rho}\bar{u})\,dx
  + \f{1}{\ep}\intr\nabla (\Phi - \bar\Phi)\cd\bar{u}(\rho-\bar{\rho})\,dx  \cr
&\quad =  -\f1{2\ep}\f{d}{dt}\intr\lt|\nabla (\Phi - \bar\Phi)\rt|^2dx
 + \f{1}{\ep}\intr\nabla (\Phi - \bar\Phi)\cd\bar{u}(\rho-\bar{\rho})\,dx.
\end{aligned}
\end{align}
For the second term on the right-hand-side of \eqref{Cou-2}, we compute that
\begin{align}\label{Cou-3}
\begin{aligned}
&\f{1}{\ep}\intr\nabla (\Phi - \bar\Phi)\cd\bar{u}(\rho-\bar{\rho})\,dx \cr
&\quad = - \f{1}{\ep}\intr\nabla (\Phi - \bar\Phi)\cd\bar{u}\Delta (\Phi - \bar\Phi)\,dx   \cr
&\quad = -\f{1}{2\ep}\intr\lt|\nabla (\Phi - \bar\Phi)\rt|^2\nabla\cd\bar{u}\,dx  +   \f{1}{\ep}\intr\nabla (\Phi - \bar\Phi)\otimes\nabla (\Phi - \bar\Phi):\nabla\bar{u}\,dx    \cr
&\quad\le \f{3}{2\ep}\|\nabla\bar{u}\|_{L^{\infty}}\intr\lt|\nabla (\Phi - \bar\Phi)\rt|^2\,dx.
\end{aligned}
\end{align}
Substituting \eqref{Cou-3} into \eqref{Cou-2}, we see that
\begin{align*}
\frac1\e \intr \rho (u - \bar u) \cdot \nabla (\bar\Phi - \Phi)\,dx  \le   -\f1{2\ep}\f{d}{dt}\intr\lt|\nabla (\Phi - \bar\Phi)\rt|^2 dx
  +   \f{3}{2\ep}\|\nabla\bar{u}\|_{L^{\infty}}\intr\lt|\nabla (\Phi - \bar\Phi)\rt|^2 dx.
\end{align*}

For the estimate of the sixth term  on the right hand side of \eqref{est_f}, we give the following lemma. As mentioned in Introduction, we cannot use the assumption $\bar u \in L^\infty(\R^d \times (0,T))$ when we consider $V=|x|^2/2$, and thus we estimate it differently from \cite{CPW20,LT17}. For the sake of the reader, we provide the details of proof at the end of this subsection.

\begin{lemma}\label{lem_ee} There exists a constant $C>0$ depending only on the regularity estimates $\|\bar\rho\|_{L^\infty(0,T;W^{1,1} \cap W^{1,\infty})}$ and $\|\nabla \log \bar\rho\|_{L^\infty(0,T;W^{2,\infty})}$ such that 
\[
\intr \rho( u - \bar u) \cdot \frac{\bar e}{\bar \rho}\,dx \leq \intr \rho |u - \bar u|^2\,dx + C\lt(\intr \rho |u - \bar u|^2\,dx \rt)^{1/2}.
\]
In particular, for $\e>0$ small enough we have
\[
\intr \rho( u - \bar u) \cdot \frac{\bar e}{\bar \rho}\,dx \leq \frac{1}{2\e}\intr \rho |u - \bar u|^2\,dx + C\e
\]
for some $C>0$ independent of $\e>0$.
\end{lemma}
We next estimate
\begin{align*}
\lt|\intr \bar u \cdot e\,dx\rt|&= \lt|\intr \bar u \cdot \lt(\nabla \cdot \lt( \intr \lt(u\otimes u - v \otimes v + \frac1\e \mathbb{I}\rt)f\,dv \rt)\rt) dx\rt|\cr
&\leq \|\nabla \bar u\|_{L^\infty} \intr \lt| \intr \lt(u\otimes u - v \otimes v + \frac1\e \mathbb{I}\rt)f\,dv\rt|dx\cr
&\leq C\|\nabla \bar u\|_{L^\infty}\lt( \intrr f|v|^2\,dxdv \rt)^{1/2}  \lt( \intrr \frac{1}{f}\lt| \frac1\e \nabla_v f - (u-v)f\rt|^2 dxdv\rt)^{1/2}\cr
&\leq C\|\nabla \bar u\|_{L^\infty}\e^{2+ \delta}
\intrr f|v|^2\,dxdv + 
\frac1{2\e^{2+\delta}}\mathcal{D}_\e(f) 
\end{align*}
due to \cite[Proof of Lemma 4.4]{KMT15}.
This yields
\begin{align*}%\label{I-5}
	\begin{aligned}
I_5 &\leq -\frac1{2\e} \int_0^t \intr \rho|u-\bar u|^2\,dxds -\f1{2\ep}\intr\lt|\nabla (\Phi - \bar\Phi)\rt|^2 dx+ \f1{2\ep}\intr\lt|\nabla (\Phi_0 - \bar\Phi_0)\rt|^2 dx \cr
&\quad   + C T\e   + \frac{C}{\e} \int_0^t \intr\lt|\nabla (\Phi - \bar\Phi)\rt|^2 dxds     + \frac1\e \int_0^t \intr \rho|u|^2\,dxds  \\ 
&\quad + C\e^{2+ \delta} \int_0^t\intrr f|v|^2\,dxdv ds +  \frac1{2\e^{2+\delta}}\int_0^t \mathcal{D}_\e(f)\,ds \cr
&\quad+ \frac1\e \lt(\intr \rho  V\,dx - \intr \rho_0  V\,dx\rt)  + \frac1{2\e}\lt( \intr \rho \Phi\,dx - \intr \rho_0  \Phi_0\,dx\rt).
\end{aligned}
\end{align*}
Thus we obtain
\begin{align*}
I_2 +I_3+ I_5 &\leq -\frac1{2\e} \int_0^t \intr \rho|u-\bar u|^2\,dxds -\f1{2\ep}\intr\lt|\nabla (\Phi - \bar\Phi)\rt|^2 dx + \f1{2\ep}\intr\lt|\nabla (\Phi_0 - \bar\Phi_0)\rt|^2 dx\cr
&\quad + \frac{C}{\e} \int_0^t \intr\lt|\nabla (\Phi - \bar\Phi)\rt|^2 dxds + \intr \mk_\e(f_0)\,dx - \intr \me_\e(U_0)\,dx   +    C T\e   \cr
&\quad + C\e^{\delta}\int_0^t\intrr f|v|^2\,dxdv ds.  
\end{align*}
Combining all of the above estimates provides
\begin{align}\label{Cou-6}
\begin{aligned}
&\f12\intr\rho|u-\bar{u}|^2\,dx  +   \f{1}{\e}\intr p(\rho|\bar{\rho})\,dx +  \frac{1}{2\e} \int_0^t \intr \rho|u-\bar u|^2\,dxds
  +   \f{1}{2\e}\intr|\nabla (\Phi - \bar\Phi)|^2 \,dx \cr
 &\qquad \leq \f12\intr\rho_0|u_0-\bar{u}_0|^2\,dx  +   \f{1}{\e}\intr p(\rho_0|\bar{\rho}_0)\,dx + \intr \lt(\mk_\e(f_0) - \me_\e(U_0)\rt)dx\cr
&\quad \qquad     +    C\e +  \f{C}{\e}\intr |\nabla (\Phi_0 - \bar\Phi_0)|^2\,dx
    +  \f{C}{\e}\int_0^t\intr |\nabla (\Phi - \bar\Phi)|^2\,dxds    \\
&\quad\qquad  \,\,+ C\e^{\delta}\int_0^t\intrr f|v|^2\,dxdv ds
\end{aligned}
\end{align}
for $\e \in (0,1)$, which implies that
\begin{align*}
\begin{aligned}
&\intr |\nabla (\Phi - \bar\Phi)|^2 \,dx \cr
&\quad  \leq \e\intr \rho_0|u_0-\bar{u}_0|^2\,dx   + 2\e\intr \lt(\mk_\e(f_0) - \me_\e(U_0)\rt)dx  + C\e^2   + C\e^{\delta+1}\int_0^t\intrr f|v|^2\,dxdv ds      \\
&\qquad    +   2\intr p(\rho_0|\bar{\rho}_0)\,dx +  C\intr |\nabla (\Phi_0 - \bar\Phi_0)|^2\,dx
+  C\int_0^t\intr |\nabla (\Phi - \bar\Phi)|^2\,dxds.
\end{aligned}
\end{align*}
Applying Gr\"onwall's lemma entails that
\begin{align*}
&\intr |\nabla (\Phi - \bar\Phi)|^2 \,dx   \\
&\quad  \leq C\e\intr\rho_0|u_0-\bar{u}_0|^2\,dx  +   C\intr p(\rho_0|\bar{\rho}_0)\,dx + C\e\intr \lt(\mk_\e(f_0) - \me_\e(U_0)\rt)dx  \cr
&\qquad   \,\,+ C\e^{\delta+1}\int_0^t\intrr f|v|^2\,dxdv ds   + C\e^2   +  C\intr|\nabla (\Phi_0 - \bar\Phi_0)|^2\,dx.
\end{align*}
Putting this into \eqref{Cou-6}, we have
$$%\begin{align}\label{Cou-7}
\begin{aligned}
&\f12\intr\rho|u-\bar{u}|^2\,dx  +   \f{1}{\e}\intr p(\rho|\bar{\rho})\,dx  +  \frac{1}{2\e} \int_0^t \intr \rho|u-\bar u|^2\,dxds
  +   \f{1}{2\e}\intr|\nabla (\Phi - \bar\Phi)|^2 \,dx \cr
 &\quad \leq C\intr\rho_0|u_0-\bar{u}_0|^2\,dx  +   \f{C}{\e}\intr p(\rho_0|\bar{\rho}_0)\,dx +C\intr \lt(\mk_\e(f_0) - \me_\e(U_0)\rt)dx \cr
 &\qquad  \,\,+ C\e^{\delta}\int_0^t\intrr f|v|^2\,dxdv ds +    C\e +  \f{C}{\e}\intr|\nabla (\Phi_0 - \bar\Phi_0)|^2\,dx
\end{aligned}
$$%\end{align}
for $\e \in (0,1)$, where $C>0$ is independent of $\e$. This completes the proof.
\end{proof}
\begin{proof}[Proof of Lemma \ref{lem_ee}]  We first notice that
\[
\bar u_j = -x _j - \pa_j \bar\Phi - \pa_j \log \bar\rho \quad \mbox{for} \quad j=1,\dots,d,
\]
and thus
\[
\bar u_i \pa_i \bar u_j = -\bar u_i \lt( \delta_{ij} + \pa_{ij} \bar\Phi  + \pa_{ij} \log \bar\rho\rt) \quad \mbox{for} \quad i,j=1,\dots,d.
\]
For notational simplicity, for the rest of this proof, we omit the summation, i.e., $u_i v_i = \sum_{i=1}^d u_i v_i$ and we denote by $\pa_i = \pa_{x_i}$ for $i=1,\dots,d$. Then we obtain
\begin{align}\label{rest_1}
\begin{aligned}
 \intr \rho (u - \bar u) \cdot (\bar u \cdot \nabla \bar u)\,dx
&= - \intr \rho (u_j - \bar u_j) \bar u_j \,dx -  \intr \rho (u_j - \bar u_j) \bar u_i (\pa_{ij} \bar\Phi ) \,dx\cr
&\quad -  \intr \rho (u_j - \bar u_j) \bar u_i \pa_{ij}\log \bar\rho \,dx.
\end{aligned}
\end{align}
On the other hand, we also find (see Remark \ref{rem-1})
\[
-\pa_j \pa_t \bar\Phi =  -\pa_j K \star \pa_t \bar\rho = \pa_{ij} K \star (\bar \rho \bar u_i) = \intr \pa_{ij} K(x-y) (\bar\rho \bar u_i)(y)\,dy
\]
and
\[
-\pa_j \pa_t \log \bar\rho = \pa_j (\bar u_i \pa_i \log \bar\rho + \pa_i \bar u_i) = (\pa_j \bar u_i) \pa_i \log \bar\rho + \bar u_i \pa_{ij} \log \bar\rho + \pa_{ij} \bar u_i
\]
for $j = 1,\dots,d$.
This implies
\begin{align*}
\intr \rho (u - \bar u) \cdot \pa_t \bar u\,dx&= \intrr \rho(x) (u_j - \bar u_j)(x) \pa_{ij}K(x-y)(\bar\rho \bar u_i)(y)\,dxdy \cr
&\quad + \intr \rho (u_j - \bar u_j)\lt( (\pa_j \bar u_i) \pa_i \log \bar\rho + \bar u_i \pa_{ij} \log \bar\rho + \pa_{ij} \bar u_i \rt) dx.
\end{align*}
We now combine this with \eqref{rest_1} to get
\begin{align*}
\intr \rho( u - \bar u) \cdot \frac{\bar e}{\bar \rho}\,dx
&= \intr \rho( u - \bar u) \cdot (\pa_t \bar u + \bar u \cdot \nabla \bar u)\,dx\cr
&= - \intr \rho (u_j - \bar u_j) \bar u_j \,dx -  \intr \rho (u_j - \bar u_j) \bar u_i (\pa_{ij} \bar\Phi ) \,dx\cr
&\quad + \intrr \rho(x) (u_j - \bar u_j)(x) \pa_{ij}K(x-y)(\bar\rho \bar u_i)(y)\,dxdy \cr
&\quad + \intr \rho (u_j - \bar u_j)\lt( (\pa_j \bar u_i) \pa_i \log \bar\rho  + \pa_{ij} \bar u_i \rt) dx\cr
&=: \sum_{i=1}^4 J_i,
\end{align*}
where $J_i, i=1,4$ can be easily estimated as
\begin{align*}
J_1 &= \intr \rho |u - \bar u|^2\,dx - \intr \rho ( u - \bar u) \cdot u\,dx\cr
& \leq \intr \rho |u - \bar u|^2\,dx + \lt(\intr \rho |u - \bar u|^2\,dx \rt)^{1/2} \lt(\intr \rho|u|^2\,dx \rt)^{1/2}\cr
&\leq \intr \rho |u - \bar u|^2\,dx + C\lt(\intr \rho |u - \bar u|^2\,dx \rt)^{1/2}
\end{align*}
and
\begin{align*}
J_4 &\leq \lt(\|\nabla (\nabla \cdot \bar u)\|_{L^\infty} + \|\nabla \bar u\|_{L^\infty}\|\nabla \log \bar\rho\|_{L^\infty} \rt) \intr \rho | u - \bar u|\,dx \cr
&\leq \lt(\|\nabla (\nabla \cdot \bar u)\|_{L^\infty} + \|\nabla \bar u\|_{L^\infty}\|\nabla \log \bar\rho\|_{L^\infty} \rt) \lt(\intr \rho | u - \bar u|^2\,dx\rt)^{1/2}.
\end{align*}
Here we used
\[
\intr \rho\,dx = 1 \quad \mbox{and} \quad \intr \rho|u|^2\,dx \leq \intrr f|v|^2\,dxdv \leq C
\]
for some $C>0$ independent of solutions $(\bar\rho, \bar u)$ and $\e$ due to Lemma \ref{lem_ent}. We next estimate
\begin{align*}
\lt|J_2 + J_3\rt| &= \lt|\intrr \rho(x) (u_j - \bar u_j)(x) \pa_{ij}K(x-y)\bar\rho(y) (\bar u_i(y) - \bar u_i(x))\,dxdy\rt|\cr
&\leq \|\nabla \bar u\|_{L^\infty} \intr \rho(x) |(u - \bar u)(x)| \lt(\intr \frac{1}{|x-y|^{d-1}} \bar\rho(y)\,dy \rt)\,dx\cr
&\leq C\|\nabla \bar u\|_{L^\infty}\|\bar\rho\|_{L^1 \cap L^\infty} \intr \rho |u - \bar u|\,dx\cr
&\leq C\|\nabla \bar u\|_{L^\infty}\|\bar\rho\|_{L^1 \cap L^\infty}\lt( \intr \rho |u - \bar u|^2\,dx\rt)^{1/2},
\end{align*}
where $C>0$ is independent of solutions $(\bar\rho, \bar u)$ and $\e$. 

We next rewrite the assumption on the regularity of solutions in terms of $\bar \rho$ only, not $\bar u$. In fact, this is not that hard, simply we find
\[
\nabla \bar u = -I_d - \nabla K \star \nabla \bar\rho - \nabla^2 \log \bar\rho
\]
and
\[
\nabla \cdot \bar u = -d + \bar \rho - \Delta \log \bar\rho.
\]
This yields
\[
\|\nabla \bar u\|_{L^\infty} \leq C\lt(1 + \|\nabla \bar\rho\|_{L^1 \cap L^\infty} + \|\nabla^2 \log \bar\rho\|_{L^\infty} \rt)
\]
and
\[
\|\nabla (\nabla \cdot \bar u)\|_{L^\infty} \leq \|\nabla \bar\rho\|_{L^\infty} + \|\nabla \Delta \log \bar\rho\|_{L^\infty}.
\]
This completes the proof.
\end{proof}

\subsection{Proof of Theorem \ref{thm_main}}
In order to close the relative entropy estimate in Proposition \ref{rel_prop}, we need to handle the kinetic energy term on the right hand side of \eqref{rel}. 

\subsubsection{Case with the confinement}

In this case, we show that the kinetic energy can be controlled by the free energy $\mf_\e(f)$. For this, we need to estimate the negative part of the entropy term. 

Note that there exists a positive constant $C$ such that the following estimate
\begin{align}\label{flogf-1}
g|\log g|  =  g\log g   -   2g\log g\,\chi_{0\leq g\leq1} \leq g\log g   +  2\left(\omega g+Ce^{-\omega/2}\right)
\end{align}
holds for $g,\omega\geq0$, where $\chi$ is a characteristic function.  We then take $g=f$ and $\omega=\frac{\varepsilon|v|^2+|x|^2}{8}$ in \eqref{flogf-1} to have
\begin{align*}
\intrr f|\log f|\,dxdv  \leq \intrr f\log f\,dxdv   +   \intrr \frac{\varepsilon|v|^2+|x|^2}{4}f\,dxdv    +   \frac{C}{\varepsilon^{d/2}}.
\end{align*}
This implies
\begin{align*}%\label{flogf-2}
\frac{1}{\varepsilon}\intrr f|\log f|\,dxdv  
&\leq \frac{1}{\varepsilon}\intrr f\log f\,dxdv   +   \intrr \frac{|v|^2}{4}f\,dxdv  \\
&\quad  + \frac{1}{\varepsilon}\intrr \frac{|x|^2}{4}f\,dxdv   +   \frac{C}{\varepsilon^{d/2+1}}.
\end{align*}
Thus we obtain
	\begin{align*}%\label{flogf-3}
		\begin{aligned}
			\mathcal{F}_{\varepsilon}(f)
			&= \frac1\e \intrr f\log f\,dxdv + \intrr f \frac{|v|^2}{2}\,dxdv + \frac1\e \intrr f\frac{|x|^2}{2}\,dx + \frac{1}{2\e} \intr \Phi \rho\,dx \\
			&\geq \frac1\e\intrr f|\log f|\,dxdv   +   \frac{1}{4}\intrr f|v|^2\,dxdv  \\
			&\quad  +   \frac{1}{4\e}\intrr f|x|^2\,dxdv   +   \frac{1}{2\e}\intr \Phi\rho \,dx   -    \frac{C}{\varepsilon^{d/2+1}},
		\end{aligned}
	\end{align*}
	and subsequently this together with Lemma \ref{lem_ent} yields
\[
\intrr f|v|^2\,dxdv  \leq 4\mathcal{F}_{\e}(f)   +   \frac{C}{\varepsilon^{d/2+1}} \leq \lt(4\mathcal{F}_{\e}(f_0)   +   \frac{C}{\varepsilon^{d/2+1}}\rt) + 2\e^\delta \int_0^t\intrr f|v|^2\,dxdvds.
\]
We now apply Gr\"onwall's lemma to the above to have
\[
 \int_0^t\intrr f|v|^2\,dxdvds \leq e^{-2\e^\delta} \lt(4\mathcal{F}_{\e}(f_0)   +   \frac{C}{\varepsilon^{d/2+1}}\rt)\lt( e^{2 \e^\delta t} - 1\rt) \leq C\mathcal{F}_{\e}(f_0) + C\e^{-d/2-1},
\]
where we used $4\mathcal{F}_{\e}(f_0) + C\e^{-d/2-1} \geq 0$. This combined with Proposition \ref{rel_prop} concludes the desired result.
\begin{remark}\label{rmk_gd} If we further assume $\bar u \in L^\infty(\R^d \times (0,T))$ i.e. $\bar e = \pa_t \bar{u} + \bar{u} \cdot\nabla \bar{u} \in L^\infty(\R^d \times (0,T))$, by the arguments used in \cite{CPW20, LT17}, then we can easily estimate
\begin{align*}
\intr \rho( u - \bar u) \cdot \frac{\bar e}{\bar \rho}\,dx &\leq \frac{1}{8\e}\intr \rho|u - \bar u|^2\,dx + C \e\intr \rho \lt|\pa_t \bar{u} + \bar{u} \cdot\nabla \bar{u} \rt|^2 dx\cr
&\leq \frac{1}{8\e}\intr \rho|u - \bar u|^2\,dx + C \e \|\bar e\|_{L^\infty}^2\|\rho\|_{L^1},
\end{align*}
where $C > 0$ is independent of $\e>0$. However, in the case with confinement, the velocity fields $\bar u$ is given by $\bar u = - x - \nabla \bar\Phi - \nabla \log \bar\rho$, and thus it seems impossible to assume the $L^\infty$-bound on $\bar u$.
\end{remark}

\subsubsection{Case without the confinement} 
Differently from the case with the confinement, in this case, we first control the (mesoscopic) kinetic energy by using the (macroscopic) kinetic and interaction energies. 

Following a similar way as in \eqref{entropy-f-2}, we deduce 
\begin{align}\label{entropy-ff}
\begin{aligned}
&\f{d}{dt}\left(\intrr f\f{|v|^2}{2}\,dxdv  +  \f{1}{2\ep}\intr\Phi\rho\,dx  \right)  \cr  
&\quad  + \f{1}{\ep}\intrr f|v|^2\,dxdv    +\f{1}{\ep^{2+\delta}}\intrr f|v-u|^2\,dxdv  
=\frac{d}{\ep^{3+\delta}}.
\end{aligned}
\end{align}
Integrating \eqref{entropy-ff} from $0$ to $t$, one has 
\begin{align*}%\label{entropy-ff-1}
\begin{aligned}
\int_0^t\intrr f|u-v|^2\,dxdvds
\leq \ep^{2+\delta}E_\e(f_0) +   \frac{dT}{\ep},
\end{aligned}
\end{align*}
where $E_\e(f_0)$ is given as in \eqref{def_e}. Since 
\begin{align*}
\intrr f|u-v|^2\,dxdv   =  \intrr f|v|^2\,dxdv   -   \intr \rho|u|^2\,dx,
\end{align*}
we further have 
\[%\begin{align}\label{entropy-ff-2}
\int_0^t\intrr f|v|^2\,dxdvds  
\leq  \ep^{2+\delta}E_\e(f_0)  +   \frac{C}{\ep}   +   \int_0^t\intr \rho|u|^2\,dxds.
\]%\end{align}
On the other hand, the (macroscopic) kinetic energy can be estimated as 
\begin{align*}%\label{Cou-6-re1}
\intr\rho|u|^2\,dx  &\leq  2\intr\rho|u-\bar{u}|^2\,dx    +  2\intr\rho|\bar{u}|^2\,dx   \cr 
&\leq  \frac{1}{4\e^{\delta+1}}\intr\rho|u-\bar{u}|^2\,dx  +  2\|\nabla (\bar\Phi + \log \bar\rho)\|_{L^\infty}
\end{align*}
for $\e \in (0,1)$ small enough.  Combining this and Proposition \ref{rel_prop} completes the proof.

\subsection{Proof of Corollary \ref{cor_main}} By Taylor's theorem, we first easily find
\[
\int_{\bar \rho}^{\rho} \frac{\rho - z}{z}\,dz \geq \frac12 \min\lt\{\frac{1}{\bar\rho}, \frac{1}{\rho} \rt\}(\rho - \bar\rho)^2.
\]
We then estimate
\begin{align}\label{est_l1}
\begin{aligned}
\lt(\intr |\rho - \bar\rho|\,dx\rt)^2 &\leq \lt(\intr (\rho + \bar \rho) \,dx \rt)\lt(\intr \min\lt\{\frac{1}{\bar\rho}, \frac{1}{\rho} \rt\}(\rho - \bar\rho)^2\,dx \rt) \cr
&\leq C\intr \int_{\bar \rho}^{\rho} \frac{\rho - z}{z}\,dzdx,
\end{aligned}
\end{align}
where $C>0$ is independent of $\e>0$ and we used $1 \leq (x+y)\min\{x^{-1}, y^{-1}\}$ for $x,y > 0$. This asserts  the convergence of $\rho \to \bar\rho$ in $L^\infty(0,T;L^1(\R^d))$ as $\e \to 0$.

Moreover, for any $\psi \in H^1(\R^d)$ with $\|\psi\|_{H^1} \leq 1$, we obtain
\begin{align*}
\lt|\intr \psi(x)(\rho - \bar\rho)(x)\,dx \rt| &= \lt|\intr \psi(x)\Delta(\Phi - \bar\Phi)(x)\,dx \rt| \cr
&=\lt|\intr \nabla \psi(x)\cdot\nabla(\Phi - \bar\Phi)(x)\,dx \rt|\cr
&\leq \|\nabla (\Phi - \bar\Phi)\|_{L^2},
\end{align*}
and this yields
\[
\|\rho - \bar\rho\|_{H^{-1}} \leq \|\nabla (\Phi - \bar\Phi)\|_{L^2}.
\]
Thus we have the convergence of $\rho \to \bar\rho$ in $L^\infty(0,T;H^{-1}(\R^d))$ as $\e \to 0$.

For the convergence of $\rho u$ towards $\bar\rho \bar u$, we estimate
\begin{align*}
\intr |\rho - \bar \rho||\bar u|\,dx 
&\leq  \intr |\rho - \bar \rho| |\nabla V|\,dx +  \|\nabla (\bar\Phi + \log \bar\rho)\|_{L^\infty}  \intr |\rho - \bar \rho|\,dx\cr
&\leq C\lt(\intr |\rho - \bar \rho|\,dx\rt)^{1/2}\lt(\intr (\rho + \bar\rho)(1+V)\,dx\rt)^{1/2} + C\intr |\rho - \bar \rho|\,dx,
\end{align*}
where $C>0$ depends on $ \|\nabla (\bar\Phi + \log \bar\rho)\|_{L^\infty}$. This deduces
\begin{align*}
\intr |\rho u - \bar\rho \bar u|\,dx &\leq \intr \rho|u - \bar u|\,dx + \intr |\rho - \bar \rho||\bar u|\,dx\cr
&\leq \lt(\intr \rho\,dx \rt)^{1/2}\lt( \intr \rho|u - \bar u|^2\,dx\rt)^{1/2} + C\intr |\rho - \bar \rho|\,dx\cr
&\quad + C\lt(\intr |\rho - \bar \rho|\,dx\rt)^{1/2}\lt(\intr (\rho + \bar\rho)(1+V)\,dx\rt)^{1/2} .
\end{align*}
We finally combine this with \eqref{est_l1} to have
\[
\|\rho u - \bar\rho \bar u\|_{L^1}^2 \leq C\intr \rho|u - \bar u|^2\,dx + C\lt(\intr \int_{\bar \rho}^{\rho} \frac{\rho - z}{z}\,dzdx\rt)^{1/2} + C\intr \int_{\bar \rho}^{\rho} \frac{\rho - z}{z}\,dzdx,
\]
where $C>0$ depends on   $ \|\nabla (\bar\Phi + \log \bar\rho)\|_{L^\infty}$. Integrating the above inequality over the time interval $[0,T]$ concludes the desired convergence estimate.

\begin{remark}\label{rmk_linf} If $\Phi$ is given as $\Phi = \tilde K \star \rho$ with $\nabla \tilde K \in L^\infty(\R^d)$, then we can estimate the fifth term on the right hand side of \eqref{est_f} as
\begin{align*}
 \frac1\e \lt|\intr \rho (u - \bar u) \cdot \nabla \tilde K \star (\bar\rho - \rho)\,dx\rt|
 &\leq \frac1\e \|\nabla \tilde K\|_{L^\infty} \|\rho - \bar\rho\|_{L^1} \intr \rho |u - \bar u|\,dx\cr
 &\leq \frac C\e \lt(\intr \int_{\bar \rho}^{\rho} \frac{\rho - z}{z}\,dzdx\rt)^{1/2}\lt(\intr \rho |u - \bar u|^2\,dx \rt)^{1/2}\cr
 & \leq \frac C\e \intr \int_{\bar \rho}^{\rho} \frac{\rho - z}{z}\,dzdx + \frac1{4\e}\intr \rho |u - \bar u|^2\,dx .
\end{align*}
This combined with almost the same arguments as before concludes the same convergence estimates appeared in Corollary \ref{cor_main}.% and Corollary \ref{cor_main1}.
\end{remark}

%%%%%%%%%%%%%%%%%%%%%%%%%%%%%%%%%%%%%%%%%%%%%%%%%

\subsection{Remarks on the regularity assumptions on the limiting system}\label{sec_rem}

In this part, we provide some estimates on $\|\nabla \log \bar\rho\|_{L^\infty(0,T;W^{2,\infty})}$ when $V = |x|^2/2$. The estimates directly cover the case $V \equiv 0$. Let us start with the estimate of $\|\nabla \log \bar\rho\|_{L^\infty}$. For notational simplicity, we set $\tilde u:= -\nabla V - \nabla \bar\Phi$, we again omit the summation, and we denote by $\pa_i = \pa_{x_i}$ for $i=1,\dots,d$.

We first show the $L^\infty$-bound on $\nabla \log \bar\rho$ in the lemma below.

\begin{lemma} There exists $C>0$ depends only on $\|\nabla \bar\rho\|_{L^\infty(0,T;L^1 \cap L^\infty)}$ and $T>0$ such that
\[
\sup_{0 \leq t \leq T} \| \nabla \log \bar\rho(\cdot,t)\|_{L^\infty} \leq C(1 +  \| \nabla \log \bar\rho_0\|_{L^\infty}).
\]
\end{lemma}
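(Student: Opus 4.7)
The strategy is to derive a closed parabolic equation for $w:=\log\bar\rho$, take one spatial derivative, and then apply a maximum-principle argument to $|\nabla w|^{2}$. The key structural gain comes from the confinement, which produces a favourable linear ``dissipation'' sign when one computes $\nabla\cdot\tilde u$ with $\tilde u:=-\nabla V-\nabla\bar\Phi$.

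\textbf{Step 1: Reformulation as a drift-diffusion equation.} Writing $\bar u = \tilde u - \nabla\log\bar\rho$, the continuity equation \eqref{main_conti} becomes
\[
\partial_t \bar\rho - \Delta \bar\rho + \nabla\cdot(\bar\rho\,\tilde u) = 0.
\]
Dividing by $\bar\rho$ and using $\Delta\bar\rho/\bar\rho = \Delta w + |\nabla w|^{2}$ yields
\[
\partial_t w - \Delta w - |\nabla w|^{2} + \tilde u \cdot \nabla w + \nabla\cdot\tilde u = 0.
\]
With $V=|x|^{2}/2$ and $-\Delta\bar\Phi=\bar\rho$ one has $\nabla\cdot\tilde u = -d-\bar\rho$, so the non-derivative inhomogeneity is harmless.

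\textbf{Step 2: Equation for $q:=\nabla w$.} Differentiating the equation for $w$ in $x_i$ and using $\partial_i q_j=\partial_j q_i$, I get
\[
\partial_t q_i - \Delta q_i - 2q_j\,\partial_j q_i + \tilde u_j\,\partial_j q_i + (\partial_i \tilde u_j)\,q_j = \partial_i \bar\rho.
\]
Multiplying by $q_i$, summing in $i$, and using $q_i\Delta q_i = \tfrac12\Delta|q|^{2} - |\nabla q|^{2}$,
\[
\partial_t|q|^{2} - \Delta|q|^{2} + (\tilde u - 2q)\cdot\nabla|q|^{2} = -2|\nabla q|^{2} - 2(\partial_i\tilde u_j)q_iq_j + 2\,q\cdot\nabla\bar\rho.
\]
Since $\partial_i \tilde u_j = -\delta_{ij}-\partial_{ij}\bar\Phi$, the source is
\[
-2(\partial_i\tilde u_j)q_iq_j + 2\,q\cdot\nabla\bar\rho = 2|q|^{2} + 2(\partial_{ij}\bar\Phi)q_iq_j + 2\,q\cdot\nabla\bar\rho .
\]

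\textbf{Step 3: Controlling $\nabla^{2}\bar\Phi$ from $\nabla\bar\rho\in L^{1}\cap L^{\infty}$.} Writing $\nabla^{2}\bar\Phi = \nabla K\star\nabla\bar\rho$ and splitting the convolution into $\{|x-y|\le 1\}$ and $\{|x-y|>1\}$, using $|\nabla K(x)|\lesssim |x|^{-(d-1)}$, gives
\[
\|\nabla^{2}\bar\Phi\|_{L^{\infty}} \le C\bigl(\|\nabla\bar\rho\|_{L^{1}} + \|\nabla\bar\rho\|_{L^{\infty}}\bigr).
\]
Combined with the trivial bound $|q\cdot\nabla\bar\rho|\le |q|\,\|\nabla\bar\rho\|_{L^{\infty}} \le \tfrac12|q|^{2}+\tfrac12\|\nabla\bar\rho\|_{L^{\infty}}^{2}$, the source term is bounded pointwise by
\[
C_{1}\,|q|^{2} + C_{2},
\]
with $C_1,C_2$ depending only on $\|\nabla\bar\rho\|_{L^{\infty}(0,T;L^{1}\cap L^{\infty})}$.

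\textbf{Step 4: Maximum principle and Gr\"onwall.} Discarding the nonpositive $-2|\nabla q|^{2}$, the function $|q|^{2}$ is a subsolution of a linear parabolic equation with drift $\tilde u - 2q$ (bounded on the maximizing sequence) and source $\le C_{1}|q|^{2}+C_{2}$. Applying the parabolic maximum principle on $\R^d$ (standard under the assumed regularity of $\bar\rho$, which guarantees the decay of $|q|^{2}$ at infinity; otherwise an approximation by spatial cut-offs or an $L^{p}$-estimate with $p\to\infty$ makes this rigorous), I obtain
\[
\frac{d}{dt}\,\||q(\cdot,t)|^{2}\|_{L^{\infty}} \le C_{1}\,\||q(\cdot,t)|^{2}\|_{L^{\infty}} + C_{2}.
\]
Gr\"onwall's inequality then yields $\|\nabla\log\bar\rho(\cdot,t)\|_{L^{\infty}}^{2}\le (\|\nabla\log\bar\rho_{0}\|_{L^{\infty}}^{2}+C_{2}T)e^{C_{1}T}$, which is exactly the claimed estimate.

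\textbf{Main obstacle.} The computation itself is clean because of the cancellation $\nabla\cdot\tilde u = -d-\bar\rho$ (no derivative loss) and the good sign of the confinement contribution $+2|q|^{2}$ that, once combined with the Calder\'on--Zygmund-type bound on $\nabla^{2}\bar\Phi$, only costs a bounded Lipschitz constant. The only delicate point is rigorously applying the maximum principle on all of $\R^{d}$; this is handled either by assuming (via the solution's a priori decay) that $|q|^{2}$ attains its supremum, or by running the same computation on $L^{p}$-norms of $q$ and letting $p\to\infty$.
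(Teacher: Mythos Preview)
Your proof is correct and follows essentially the same approach as the paper: derive the parabolic equation for $\nabla\log\bar\rho$, bound $\|\nabla^{2}\bar\Phi\|_{L^{\infty}}$ by $\|\nabla\bar\rho\|_{L^{1}\cap L^{\infty}}$, and close via the maximum principle plus Gr\"onwall. The only cosmetic differences are that you work with $|q|^{2}$ while the paper argues componentwise on $\partial_{i}\log\bar\rho$, and a harmless sign slip (since $-\Delta\bar\Phi=\bar\rho$ one has $\nabla\cdot\tilde u=-d+\bar\rho$, not $-d-\bar\rho$).
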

\begin{proof}
Note that $\pa_i \log \bar\rho$ satisfies
\[
\pa_t \pa_i \log \bar\rho + \tilde u \cdot \nabla \pa_i \log \bar\rho = - \nabla \cdot \pa_i \tilde u - \pa_i \tilde u \cdot \nabla \log \bar\rho + \Delta \pa_i \log \bar\rho + 2 \nabla \pa_i \log \bar\rho \cdot \nabla \log \bar\rho
\]
for $i=1,\dots,d$. For a given $t$, at any local Maximum point of $\pa_i \log \bar\rho$, we get
\[%\bq\label{max}
\Delta \pa_i \log \bar\rho \leq 0 \quad \mbox{and} \quad \nabla \pa_i \log \bar\rho = 0.
\]%\eq
Using this observation together with an elementary estimate yields
\[
\frac{d}{dt}\| \nabla \log \bar\rho\|_{L^\infty} \leq C(\|\nabla (\nabla \cdot \tilde u)\|_{L^\infty} + \|\nabla \tilde u\|_{L^\infty} \|\nabla \log \bar\rho\|_{L^\infty}),
\]
and applying Gr\"onwall's lemma gives
\[
 \| \nabla \log \bar\rho\|_{L^\infty} \leq C(1 +  \| \nabla \log \bar\rho_0\|_{L^\infty}),
\]
where $C>0$ depends only on $\|\nabla (\nabla \cdot \tilde u)\|_{L^\infty}$, $\|\nabla \tilde u\|_{L^\infty}$, and $T>0$. On the other hand, similarly as in the proof of Lemma \ref{lem_ee}, we easily get
\[
\|\nabla \tilde u\|_{L^\infty} \leq C\lt(1 + \|\nabla \bar\rho\|_{L^1 \cap L^\infty}\rt)
\]
and
\[
\|\nabla (\nabla \cdot \tilde u)\|_{L^\infty} \leq \|\nabla \bar\rho\|_{L^\infty} .
\]
This completes the proof.
\end{proof}

We next provide higher-order estimates on $\nabla \log \bar\rho$.

\begin{lemma}
There exists $0 < T_* \leq T$ such that
\[
\sup_{0 \leq t \leq T_*}\|\nabla^2 \log \bar\rho(\cdot,t)\|_{L^\infty}  \leq C_1
\]
and
\[
\sup_{0 \leq t \leq T_*}\|\nabla^3 \log \bar\rho(\cdot,t)\|_{L^\infty}  \leq C_2,
\]
where $C_k>0, k=1,2$ depends only on $\|\nabla \log \bar\rho_0\|_{W^{k,\infty}}$ and $\|\nabla \bar\rho\|_{L^\infty(0,T;W^{k,1} \cap W^{k,\infty})}$.
\end{lemma}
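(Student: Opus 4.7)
The plan is to follow the maximum-principle strategy of the previous lemma applied to second and third spatial derivatives of $\log \bar\rho$. Writing $\tilde u := -\nabla V - \nabla \bar\Phi$ so that $\bar u = \tilde u - \nabla \log \bar\rho$ and dividing the continuity equation in \eqref{main_conti} by $\bar\rho$ yields the transport--parabolic equation
\[
\pa_t \log \bar\rho + \tilde u \cdot \nabla \log \bar\rho = |\nabla \log \bar\rho|^2 - \nabla \cdot \tilde u + \Delta \log \bar\rho.
\]
The specific structure $\nabla \cdot \tilde u = -d + \bar\rho$ (for $V = |x|^2/2$, using $-\Delta \bar\Phi = \bar\rho$) is convenient because each additional $x$-derivative of $\nabla \cdot \tilde u$ costs only one derivative of $\bar\rho$ rather than three. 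The quadratic gradient nonlinearity $|\nabla \log \bar\rho|^2$ on the right is the source of all the difficulty.

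For the $\nabla^2$ bound I would apply $\pa_{ij}$ to the above equation and set $h_{ij} := \pa_{ij} \log \bar\rho$. The resulting equation has the form
\[
\pa_t h_{ij} + \tilde u \cdot \nabla h_{ij} - 2 \nabla \log \bar\rho \cdot \nabla h_{ij} - \Delta h_{ij} = 2 h_{ik} h_{jk} + R_{ij},
\]
where $R_{ij}$ is at most linear in $\nabla^2 \log \bar\rho$ with coefficients controlled by $\|\nabla \tilde u\|_{L^\infty}$, $\|\nabla^2 \tilde u\|_{L^\infty}$, $\|\nabla^2 \bar\rho\|_{L^\infty}$, and $\|\nabla \log \bar\rho\|_{L^\infty}$. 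Using $\nabla^2 \tilde u = \nabla K \star \nabla^2 \bar\rho$ together with the near/far split already employed in the proof of Lemma \ref{lem_ee}, each of these coefficients is bounded by $\|\nabla \bar\rho\|_{L^\infty(0,T; W^{1,1} \cap W^{1,\infty})}$ and by the previous lemma. Evaluating at a spatial maximum of $h_{ij}$, the transport and Laplacian contributions drop out and one is left with the Riccati-type inequality
\[
\frac{d}{dt} \|\nabla^2 \log \bar\rho\|_{L^\infty} \leq A_1 + A_2 \|\nabla^2 \log \bar\rho\|_{L^\infty} + A_3 \|\nabla^2 \log \bar\rho\|_{L^\infty}^2.
\]
Comparison with the scalar ODE then produces a maximal time $T_* \in (0,T]$ and a constant $C_1$, both depending only on the listed data and $\|\nabla^2 \log \bar\rho_0\|_{L^\infty}$. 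The quadratic term $2 h_{ik} h_{jk}$ is the main obstacle of the proof: it prevents a global-in-time bound and forces $T_* < T$ in general.

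For the $\nabla^3$ bound I would differentiate once more, setting $q_{ijk} := \pa_{ijk} \log \bar\rho$. The key observation is that once $\|\nabla^2 \log \bar\rho\|_{L^\infty}$ is controlled on $[0, T_*]$, the expansion of $\pa_{ijk}(|\nabla \log \bar\rho|^2)$ splits into the term $2 \nabla \log \bar\rho \cdot \nabla q_{ijk}$, which vanishes at a maximum of $q_{ijk}$, together with bilinear products of $\nabla^2 \log \bar\rho$ with $\nabla^3 \log \bar\rho$; each of these contributes at most linearly in $\|\nabla^3 \log \bar\rho\|_{L^\infty}$. The remaining source terms involve $\|\nabla^3 \tilde u\|_{L^\infty}$ and $\|\nabla^3 \bar\rho\|_{L^\infty}$, both controlled by $\|\nabla \bar\rho\|_{L^\infty(0,T; W^{2,1} \cap W^{2,\infty})}$ via the same convolution estimate. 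Evaluating at a local maximum of $q_{ijk}$ then yields the \emph{linear} differential inequality
\[
\frac{d}{dt} \|\nabla^3 \log \bar\rho\|_{L^\infty} \leq B_1 + B_2 \|\nabla^3 \log \bar\rho\|_{L^\infty},
\]
which Gr\"onwall's lemma closes on $[0,T_*]$ with a constant $C_2$ of the stated dependence. The usual rigorous justification of the maximum-principle argument for an $L^\infty$-norm (via approximation by large $L^p$-norms, or through smooth mollifications of $\bar\rho$) carries over without change from the previous lemma, so I would not belabour this point.
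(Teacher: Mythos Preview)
Your proposal is correct and follows essentially the same argument as the paper: differentiate the transport--parabolic equation for $\log\bar\rho$ (with drift $\tilde u$), apply the maximum principle to each component $\pa_{ij}\log\bar\rho$ and $\pa_{ijk}\log\bar\rho$, obtain a Riccati-type inequality at second order (hence the need for $T_*$) and, once $\|\nabla^2\log\bar\rho\|_{L^\infty}$ is bounded, a linear Gr\"onwall inequality at third order. The identification of the quadratic term $2h_{ik}h_{jk}$ as the sole obstruction and the control of $\nabla^k\tilde u$, $\nabla^k(\nabla\cdot\tilde u)$ through $\|\nabla\bar\rho\|_{W^{k-1,1}\cap W^{k-1,\infty}}$ all match the paper's computations.
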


\begin{proof}
For $i,j = 1,\dots,d$, we get
\begin{align*}
\pa_t \pa_{ij} \log \bar\rho + \tilde u \cdot \nabla \pa_{ij} \log \bar\rho &= -\pa_j \tilde u \cdot \nabla \pa_i \log \bar\rho - \nabla \cdot \pa_{ij} \tilde u - \pa_{ij} \tilde u \cdot \nabla \log \bar\rho - \pa_i \tilde u \cdot \nabla \pa_j \log \bar\rho\cr
&\quad + \Delta \pa_{ij} \log \bar\rho + 2\nabla \pa_{ij} \log \bar\rho \cdot \nabla \log \bar\rho + 2 \nabla \pa_i \log \bar\rho \cdot \nabla \pa_j \log \bar\rho.
\end{align*}
Then similarly as before, we obtain
\begin{align*}
&\frac{d}{dt}\|\nabla^2 \log \bar\rho(\cdot,t)\|_{L^\infty} \cr
&\quad \ls \|\nabla \tilde u\|_{L^\infty}\|\nabla^2 \log \bar\rho\|_{L^\infty} + \|\nabla^2 (\nabla \cdot \tilde u)\|_{L^\infty} + \|\nabla^2 \tilde u\|_{L^\infty} \|\nabla \log \bar\rho\|_{L^\infty} + \|\nabla^2 \log \bar\rho\|_{L^\infty}^2\cr
&\quad \leq C(1 + \|\nabla \bar\rho\|_{W^{1,1} \cap  W^{1,\infty}} )(1 + \|\nabla \log \bar\rho\|_{L^\infty}+ \|\nabla^2 \log \bar\rho\|_{L^\infty} + \|\nabla^2 \log \bar\rho\|_{L^\infty}^2).
\end{align*}
This gives the first assertion. 

We next estimate $ \| \nabla^3 \log \bar\rho\|_{L^\infty}$. For $i,j,k=1,\dots,d$, we find
\begin{align*}
\pa_t \pa_{ijk} \log \bar\rho + \tilde u \cdot \nabla \pa_{ijk} \log \bar\rho &= -\pa_k \tilde u \cdot \nabla \pa_{ij} \log \bar\rho - \pa_{jk}\tilde u \cdot \nabla \pa_i \log \bar\rho - \pa_j \tilde u \cdot \nabla \pa_{ik} \log \bar\rho \cr
&\quad - \nabla \cdot \pa_{ijk} \tilde u - \pa_{ijk} \tilde u \cdot \nabla \log \bar\rho - \pa_{ij} \tilde u \cdot \nabla \pa_k \log \bar\rho \cr
&\quad - \pa_{ik} \tilde u \cdot \nabla \pa_j \log \bar\rho - \pa_i \tilde u \cdot \nabla \pa_{jk} \log \bar\rho +  \Delta \pa_{ijk} \log \bar\rho \cr
&\quad + 2\nabla \pa_{ijk} \log \bar\rho \cdot \nabla \log \bar\rho + 2 \nabla \pa_{ij} \log \bar\rho \cdot \nabla \pa_k \log \bar\rho\cr
&\quad + 2 \nabla \pa_{ik} \log \bar\rho \cdot \nabla \pa_j \log \bar\rho + 2\nabla \pa_i \log \bar\rho \cdot \nabla \pa_{jk} \log \bar\rho.
\end{align*}
Then by a similar fashion as above, we obtain
\begin{align*}
\frac{d}{dt}\|\nabla^3 \log \bar\rho(\cdot,t)\|_{L^\infty} &\ls \|\nabla^3 \log \bar\rho\|_{L^\infty}\lt(\|\nabla \tilde u\|_{L^\infty} + \|\nabla^2 \log \bar\rho\|_{L^\infty} \rt)\cr
&\quad + \|\nabla^2 \log \bar\rho\|_{L^\infty}\|\nabla^2 \tilde u\|_{L^\infty} + \|\nabla \log \bar\rho\|_{L^\infty}\|\nabla^3 \tilde u\|_{L^\infty} + \|\nabla^3 (\nabla \cdot \tilde u)\|_{L^\infty}.
\end{align*}
Since $\|\nabla^3 \tilde u\|_{L^\infty} + \|\nabla^3 (\nabla \cdot \tilde u)\|_{L^\infty} \leq C\|\nabla^3 \bar\rho\|_{L^1 \cap L^\infty}$, we conclude the second assertion.
\end{proof}
The above two lemmas imply
\[
\sup_{0 \leq t \leq T_*}\|\nabla \log \bar\rho(\cdot,t)\|_{W^{2,\infty}}  \leq C,
\]
for some $T_*>0$ and $C>0$ depends only on $\|\nabla \log \bar\rho_0\|_{W^{2,\infty}}$ and $\|\nabla \bar\rho\|_{L^\infty(0,T;W^{2,1} \cap W^{2,\infty})}$.

Combining all of the above discussion yields that Lemma \ref{lem_ee} can be restated as
\begin{lemma}\label{lem_re2} There exists $C>0$ depending only on $\|\bar\rho\|_{L^\infty(0,T;W^{3,1} \cap W^{3,\infty})}$ and $\|\nabla \log \bar\rho_0\|_{W^{2,\infty}}$ such that 
\[
\intr \rho( u - \bar u) \cdot \frac{\bar e}{\bar \rho}\,dx \leq \intr \rho |u - \bar u|^2\,dx + C\lt(\intr \rho |u - \bar u|^2\,dx \rt)^{1/2}.
\]
\end{lemma}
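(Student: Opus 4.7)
The plan is to obtain Lemma \ref{lem_re2} as a direct corollary of Lemma \ref{lem_ee} by using the two intermediate lemmas just established in Section \ref{sec_rem} to translate the hypothesis on $\bar u$ (and $\nabla\log\bar\rho$) into a hypothesis on $\bar\rho$ alone. Indeed, Lemma \ref{lem_ee} already gives the desired pointwise-in-time inequality
\[
\intr \rho(u-\bar u)\cdot\frac{\bar e}{\bar\rho}\,dx \leq \intr \rho|u-\bar u|^2\,dx + C\lt(\intr \rho|u-\bar u|^2\,dx\rt)^{1/2},
\]
with a constant $C>0$ depending only on $\|\bar\rho\|_{L^\infty(0,T;W^{1,1}\cap W^{1,\infty})}$ and on $\|\nabla\log\bar\rho\|_{L^\infty(0,T;W^{2,\infty})}$. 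Thus all that remains is to bound this latter quantity in terms of $\|\bar\rho\|_{L^\infty(0,T;W^{3,1}\cap W^{3,\infty})}$ and the initial data $\|\nabla\log\bar\rho_0\|_{W^{2,\infty}}$.

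This is precisely the content of the two preceding lemmas. The first gives $\|\nabla\log\bar\rho(\cdot,t)\|_{L^\infty}\leq C(1+\|\nabla\log\bar\rho_0\|_{L^\infty})$ with a constant controlled by $\|\nabla\bar\rho\|_{L^\infty(0,T;L^1\cap L^\infty)}$ and $T$; the second upgrades this to a uniform bound on $\|\nabla^2\log\bar\rho\|_{L^\infty}$ and $\|\nabla^3\log\bar\rho\|_{L^\infty}$ on some $[0,T_*]$, with constants controlled by $\|\nabla\log\bar\rho_0\|_{W^{2,\infty}}$ and $\|\nabla\bar\rho\|_{L^\infty(0,T;W^{2,1}\cap W^{2,\infty})}$. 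Since $\|\nabla\bar\rho\|_{W^{k,1}\cap W^{k,\infty}}$ is dominated by $\|\bar\rho\|_{W^{k+1,1}\cap W^{k+1,\infty}}$, taking $k=2$ yields the needed control by $\|\bar\rho\|_{L^\infty(0,T;W^{3,1}\cap W^{3,\infty})}$.

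Substituting the resulting estimate on $\|\nabla\log\bar\rho\|_{L^\infty(0,T_*;W^{2,\infty})}$ into the constant furnished by Lemma \ref{lem_ee} and absorbing the time horizon $T_*$ into the constant gives the claim. There is no genuine obstacle here beyond bookkeeping: the two intermediate lemmas were tailored exactly to supply the regularity required by Lemma \ref{lem_ee}, and the translation $\bar u=-\nabla V-\nabla\bar\Phi-\nabla\log\bar\rho$ together with the Calder\'on--Zygmund-type estimates used in the proof of Lemma \ref{lem_ee} (where $\|\nabla\bar u\|_{L^\infty}$ and $\|\nabla(\nabla\cdot\bar u)\|_{L^\infty}$ were already bounded by $\|\nabla\bar\rho\|_{L^1\cap L^\infty}$ and $\|\nabla^2\log\bar\rho\|_{L^\infty}$) closes the loop. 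The only mild subtlety is that the bounds hold only on $[0,T_*]$ rather than on the full interval $[0,T]$, which is why the final statement does not explicitly exhibit a time horizon in the constant.
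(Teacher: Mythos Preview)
Your proposal is correct and follows essentially the same approach as the paper: the paper simply states that ``combining all of the above discussion yields that Lemma~\ref{lem_ee} can be restated as'' Lemma~\ref{lem_re2}, i.e., it obtains the result by feeding the bounds on $\|\nabla\log\bar\rho\|_{L^\infty(0,T_*;W^{2,\infty})}$ from the two intermediate lemmas into the constant of Lemma~\ref{lem_ee}, exactly as you describe. Your observation about the implicit dependence on $T_*$ is also consistent with the paper's presentation.
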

\begin{remark}For the periodic domain case $\T^d$, the bound on $\|\bar\rho\|_{L^\infty(0,T;W^{3,1})}$ is not required, thus the constant $C > 0$ appeared in Lemma \ref{lem_re2} only depends on $\|\bar\rho\|_{L^\infty(0,T;W^{3,\infty})}$ and $\|\nabla \log \bar\rho_0\|_{W^{2,\infty}}$. Thus if one can establish a solution $\bar\rho$ in $L^\infty(0,T;H^s(\T^d))$ with $s > d/2+3$, then we have the bound $\|\bar\rho\|_{L^\infty(0,T;W^{3,\infty})} < \infty$ due to the Sobolev embedding.
\end{remark}
%%%%%%%%%%%%%%%%%%%%%%%%%%%%%%%%%%%%%%%%%%%%%%%%%%%%%%%%%%%%%

%%%%%%%%%%%%%%%%%%%%%%%%%%%%%%%%%%%%%%%%%%%%%%%%%%%%
\section{Regularity Estimates for $\bar\rho$}\label{sec:bdd}
In this section, we provide the regularity estimates for $\bar\rho$ used in the arguments in the previous section. To this end, we first simplify the aggregation-diffusion equation \eqref{main_conti} by substituting its second equation into the first one, we have

\begin{equation}\label{Eqs}
  \pa_t\bar\rho  =  \Delta\bar\rho  +  \nabla\cdot(\bar\rho(\nabla\bar\Phi + \nabla V)), \quad
-\Delta\bar\Phi  =  \bar\rho.
\end{equation}
Since we consider the quadratic confinement $V(x)=|x|^2/2$, \eqref{Eqs} thus reduces to
\begin{equation}\label{Eqs-1}
  \pa_t\bar\rho  =  \Delta\bar\rho  +  \nabla\cdot(\bar\rho\nabla\bar\Phi + x\bar\rho), \quad
-\Delta\bar\Phi  =  \bar\rho.
\end{equation}
Taking the change of variables, motivated from \cite[Section 3]{CT98},  
\[
\bar\rho(x,t)=e^{dt}n(e^tx,\frac12(e^{2t}-1))=:e^{dt}n(\bar x,\bar t) 
\] with $\bar x=e^t x$ and $\bar t=\frac{1}{2}(e^{2t}-1)$ in \eqref{Eqs-1}, then we can compute 
\begin{align*}
\pa_t\bar\rho(x,t)&= de^{dt}n(\bar x,\bar t) +  e^{(d+1)t}x\cdot \nabla_{\bar x} n(\bar x,\bar t)   +   e^{(d+2)t}\pa_{\bar t} n(\bar x,\bar t),\cr
\nabla_{x}\bar\rho(x,t)&= e^{(d+1)t}\nabla_{\bar x} n(\bar x,\bar t),\cr
\Delta_{x}\bar\rho(x,t)&=\nabla_{x}\cdot(\nabla_{x}\bar\rho(x,t))= \nabla_{x}\cdot(e^{(d+1)t}\nabla_{\bar x} n(\bar x,\bar t)) = e^{(d+2)t}\Delta_{\bar x} n(\bar x,\bar t),
\end{align*}
and
\begin{equation*}
\nabla_{x}\cdot(x\bar\rho(x,t)) = \nabla_{x}\cdot(xe^{dt}n(\bar x,\bar t)) = de^{dt}n(\bar x,\bar t) +  xe^{(d+1)t}\cdot\nabla_{\bar x} n(\bar x,\bar t).
\end{equation*}
From the Poisson equation in \eqref{Eqs-1}, one can express $\bar\Phi(x,t)$ uniquely as $\bar\Phi(x,t)=K\star\bar\rho$, where $K$ is defined as in Remark \ref{rem-1}. 
Then we deduce that 
\begin{align}\label{Phi-1}
\begin{aligned}
\nabla_{x}\cdot\left(\bar\rho(x,t)\nabla_{x}\bar\Phi(x,t)\right)
&=\nabla_{x}\bar\rho(x,t)\cdot\nabla_{x}\bar\Phi(x,t)   +   \bar\rho(x,t)\Delta_{x}\bar\Phi(x,t) \\
&=\nabla_{x}\bar\rho(x,t)\cdot\nabla_{x}\bar\Phi(x,t)   -  \bar\rho^2(x,t).
\end{aligned}
\end{align}
The property of convolution entails that 
\begin{align*}
\nabla_{x}\bar\Phi(x,t)=\intr\nabla_{x} K(x-y)\bar\rho(y,t)\,dy
\end{align*}
and
\begin{align*}
\nabla_{x}K(x-y)
=\frac{x-y}{|x-y|^d}
=\frac{e^{-t}(\bar x-\bar y)}{e^{-dt}|\bar x-\bar y|^d}   
=e^{(d-1)t}\frac{\bar x-\bar y}{|\bar x-\bar y|^{d}}  =  e^{(d-1)t}\nabla_{\bar x}K(\bar x-\bar y)
\end{align*}
with $\bar y=e^{t}y$. Hence, we have 
\begin{align}\label{Phi-2}
\nabla_{x}\bar\Phi(x,t) = e^{(d-1)t}\intr\nabla_{\bar x}K(\bar x-\bar y)n(\bar y,\bar t)\,d\bar y = e^{(d-1)t}(\nabla_{\bar x}K\star n)(\bar x,\bar t).
\end{align}
Substituting \eqref{Phi-2} into \eqref{Phi-1}, one has
\begin{align*}
\nabla_{x}\cdot\left(\bar\rho(x,t)\nabla_{x}\bar\Phi(x,t)\right)
&=e^{(d+1)t}\nabla_{\bar x}n(\bar x,\bar t)\cdot e^{(d-1)t}(\nabla_{\bar x}K\star n)(\bar x,\bar t)
   -  e^{2dt}n^2(\bar x,\bar t) \\
& = e^{2dt}\left(\nabla_{\bar x}n(\bar x,\bar t)\cdot(\nabla_{\bar x}K\star n)(\bar x,\bar t) - n^2(\bar x,\bar t) \right) \\
&=: e^{2dt}\nabla_{\bar x}\cdot(n(\bar x,\bar t)\nabla_{\bar x}\Psi(\bar x,\bar t)),
\end{align*}
where we denote $\Psi(\bar x,\bar t):=(K\star n)(\bar x,\bar t)$. Substituting the above equalities into \eqref{Eqs-1} and using the fact that $e^{(d-2)t}=(2\bar t+1)^{(d-2)/2}$, then we obtain the equation for $n(\bar x,\bar t)$ as the following form, for simplicity, we still use the notation $x$ and $t$,
\begin{equation}\label{eqs-n}
\pa_t n  =  \Delta n   +   (2t+1)^{(d-2)/2}\nabla\cdot(n\nabla\Psi),\quad -\Delta\Psi=n
\end{equation}    
with initial data $n_0(x):=n(x,0)=\bar\rho(x,0)=\bar\rho_0(x)$. For the local-in-time existence and uniqueness of smooth solutions to \eqref{eqs-n}, we refer to \cite{CJpre} where Riesz interaction potential is considered, but it can be easily extended to the Coulomb one. With \eqref{eqs-n} at hand, $n$ can therefore be represented by the following Duhamel integral equation
\begin{equation*}
n(t)=e^{t\Delta}n_0   +  \int_0^t e^{(t-s)\Delta}\left((2s+1)^{(d-2)/2}\nabla\cdot\left(n(s)\nabla\Psi(s)\right)\right)ds
\qquad \textrm{for\,\, all} \,\,t\in(0,T)
\end{equation*}
with $\Psi=K\star n$ and $K$ defined by Remark \ref{rem-1}. Here $\{e^{t\Delta}\}_{t \geq 0}$ denotes the semigroup generated by the heat equation:
\[
\pa_t n  =  \Delta n, \quad x \in \R^d.
\]

In the proposition below, we provide some bound estimates for the equation \eqref{eqs-n}. For this, we introduce weighted norms for a function $f = f(x)$
\[
\|f\|_{L^\infty_r} := \esssup_{x \in \R^d} (1+|x|^2)^{r/2} f(x), \quad \|f\|_{W^{k,\infty}_r} := \sum_{j=0}^k \|\nabla^j f\|_{L^\infty_r}.
\]
$L^\infty_r(\R^d)$ and $W^{k,\infty}_r(\R^d)$ are functions spaces with finite corresponding norms. 
\begin{proposition}\label{prop_bdd} Let $T>0$ and $n$ be a solution to the equation \eqref{eqs-n} on the time interval $[0,T)$ with sufficient regularity. 
Suppose that the initial data $n_0$ satisfies
\[
n_0 \in W^{3,\infty}_r (\R^d) \quad \mbox{with} \quad r > d.  
\]
Then there exists $T_* \in (0,T]$ such that
\[
\sup_{0 \leq t \leq T_*} \|n(\cdot,t)\|_{W^{3,\infty}_r} < \infty.
\]
\end{proposition}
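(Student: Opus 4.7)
The strategy is to run a standard local well-posedness / bootstrap argument in the weighted space $W^{3,\infty}_r(\R^d)$ using the Duhamel representation
\[
n(t) = e^{t\Delta}n_0 + \int_0^t (2s+1)^{(d-2)/2}\,\na e^{(t-s)\Delta}\bigl(n(s)\na\Psi(s)\bigr)\,ds,
\]
together with three ingredients: (i) weighted heat-semigroup bounds, (ii) $L^\infty$ estimates on derivatives of the Poisson potential $\Psi = K\star n$, and (iii) a Leibniz/nonlinear bound for $n\na\Psi$. Throughout, the scalar factor $(2s+1)^{(d-2)/2}$ is harmless on $[0,T]$.

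First, I would prove the weighted semigroup estimates. Using the elementary inequality $(1+|x|^2)^{r/2} \le C_r(1+|x-y|^2)^{r/2}(1+|y|^2)^{r/2}$ and the fact that the Gaussian kernel has all polynomial moments with $\int |\na^j G_t(z)|(1+|z|^2)^{r/2}\,dz \ls 1 + t^{r/2-j/2}$, one obtains, for $t \in (0,1]$ and $k \in \{0,1,2,3\}$,
\[
\|e^{t\Delta}f\|_{W^{k,\infty}_r} \ls \|f\|_{W^{k,\infty}_r}, \qquad \|\na e^{t\Delta}f\|_{W^{k,\infty}_r} \ls t^{-1/2}\,\|f\|_{W^{k,\infty}_r}.
\]

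Second, I would establish the Poisson estimate
\[
\|\na^{j+1}\Psi\|_{L^\infty} \ls \|\na^j n\|_{L^\infty_r} \quad \text{for } r > d,\ j=0,1,2,3.
\]
The crucial move is to transfer derivatives onto $n$ before convolving, i.e.\ $\na^{j+1}\Psi = \na K \star \na^j n$, which avoids the singular Calder\'on--Zygmund kernel $\na^2 K$. Since $|\na K(x)| \ls |x|^{-(d-1)}$ is locally integrable and the weight $(1+|y|^2)^{-r/2}$ with $r>d$ makes the far-field tail integrable, splitting the convolution into the ball $|x-y|\le 1$ and its complement gives the bound. By Leibniz this yields the quadratic nonlinear estimate $\|n\na\Psi\|_{W^{3,\infty}_r} \ls \|n\|_{W^{3,\infty}_r}^2$, where the highest-order term involves $\na^4\Psi = \na K \star \na^3 n$, which is still controlled thanks to $n \in W^{3,\infty}_r$.

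Putting these ingredients into Duhamel and setting $M(t) := \sup_{0 \le s \le t}\|n(s)\|_{W^{3,\infty}_r}$ gives
\[
M(t) \le C\|n_0\|_{W^{3,\infty}_r} + C(T)\int_0^t (t-s)^{-1/2} M(s)^2\,ds \le C\|n_0\|_{W^{3,\infty}_r} + C(T)\,t^{1/2} M(t)^2.
\]
A standard continuity/bootstrap argument then produces $T_* \in (0,T]$ such that $M(t) \le 2C\|n_0\|_{W^{3,\infty}_r}$ on $[0,T_*]$, yielding the claim. The main technical obstacle is step two: getting $L^\infty$ control on up to four derivatives of $\Psi$ without invoking Calder\'on--Zygmund theory (which would fail at $p=\infty$). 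The resolution is precisely the redistribution of derivatives onto $n$, together with the assumption $r>d$, which provides just enough integrable decay to close the singular convolution in the weighted $L^\infty_r$-framework.
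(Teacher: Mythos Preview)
Your argument is correct, but it takes a genuinely different route from the paper. The paper does not use the Duhamel representation or heat-semigroup smoothing at all. Instead it derives, for each weighted quantity $Y=(1+|x|^2)^{r/2}n$, $Z=(1+|x|^2)^{r/2}\na n$, $G=(1+|x|^2)^{r/2}\na^2 n$, $Q=(1+|x|^2)^{r/2}\na^3 n$, a drift-diffusion equation of the form
\[
\pa_t W + b\cdot\na W - \Delta W = R,
\]
and then invokes a maximum-principle/comparison result (of Degond type) to obtain $\|W(t)\|_{L^\infty}\le \|W(0)\|_{L^\infty}+\int_0^t\|R(s)\|_{L^\infty}\,ds$. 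The right-hand sides $R$ are bounded using exactly the same Poisson estimate you identify, namely $\|\na^{k}\Psi\|_{L^\infty}\ls \|\na^{k-1}n\|_{L^1\cap L^\infty}\ls \|\na^{k-1}n\|_{L^\infty_r}$ via $\na^k\Psi=\na K\star\na^{k-1}n$ and $r>d$. At the level of $Y$ the inequality is linear and closes by Gr\"onwall on all of $[0,T]$; at the level of $Z$ one gets a Riccati-type inequality, which is what forces the restriction to $[0,T_*]$; once $\|Z\|_{L^\infty}$ is bounded, $G$ and $Q$ again satisfy linear Gr\"onwall inequalities.

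The trade-offs: your semigroup argument is more compact and self-contained, packaging all four orders into a single quadratic bootstrap $M(t)\le C\|n_0\|+Ct^{1/2}M(t)^2$, and it avoids appealing to an external maximum-principle lemma. The paper's level-by-level maximum-principle approach, on the other hand, yields slightly sharper information (it shows $\|n\|_{L^\infty_r}$ is actually bounded on the whole interval $[0,T]$, with the local-in-time restriction entering only at the gradient level), and it sidesteps the need to prove weighted heat-kernel bounds. Both routes rely on the same key observation you flag, namely shifting derivatives onto $n$ in $\na^{j+1}\Psi=\na K\star\na^j n$ so as to avoid a Calder\'on--Zygmund operator at $p=\infty$.
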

\begin{remark} It is clear that Proposition \ref{prop_bdd} implies
\[
\sup_{0 \leq t \leq T_*} \|\bar\rho(\cdot,t)\|_{W^{3,\infty}_r}< \infty.
\]
Furthermore, we deduce
\[
\bar\rho \in L^\infty(0,T_*; W^{3,1}(\R^d))
\]
since
\begin{align*}
\intr |\nabla^k \bar\rho|\,dx &= \intr \frac{1}{(1+|x|^2)^{r/2}} (1+|x|^2)^{r/2}|\nabla^k \bar\rho(x)|\,dx\cr
&\leq \|\nabla^k \bar\rho\|_{L^\infty_r} \intr  \frac{1}{(1+|x|^2)^{r/2}}\,dx \cr
&< \infty
\end{align*}
due to $r > d$, for $k=0,1,2,3$. Hence, by combining this observation and the discussion in Section \ref{sec_rem}, under the assumption that
\[
\log \bar\rho_0 \in W^{3,\infty}(\R^d) \quad \mbox{and} \quad \bar\rho_0 \in W^{3,\infty}_r(\R^d)
\]
we have $\bar\rho \in L^\infty(0,T_*; W^{3,1} \cap W^{3,\infty}(\R^d))$, which is the sufficient condition for the estimate appeared in Lemma \ref{lem_ee} (see also Lemma \ref{lem_re2}).
\end{remark}

For the rest of this section, we devote ourselves to prove Proposition \ref{prop_bdd}. We first start with the $L^p(\R^d)$-estimate of $n(x,t)$. For any $1\leq p< \infty$, multiplying the first equation in \eqref{eqs-n} by $pn^{p-1}$, integrating the resulting equation with respect to $x$, and using integration by parts, one has
\begin{align*}
\frac{d}{dt}\|n\|_{L^p}^p    +   \frac{4(p-1)}{p}\|\nabla n^{\frac{p}{2}}\|_{L^2}^2
  &=  p(2t+1)^{(d-2)/2}\intr n^{p-1}\nabla\cdot(n\nabla\Psi)\,dx   \\
  & =-p(p-1)(2t+1)^{(d-2)/2}\intr n^{p-1}\nabla n\cdot\nabla\Psi\,dx  \\
  & =-(p-1)(2t+1)^{(d-2)/2}\intr \nabla n^{p}\cdot\nabla\Psi\,dx\\
  &= (p-1)(2t+1)^{(d-2)/2}\intr  n^{p}\Delta\Psi\,dx\\
  &= -(p-1)(2t+1)^{(d-2)/2}\intr  n^{p+1}\,dx \leq 0,
\end{align*}
which implies 
\begin{equation}\label{Lp-n}
\sup_{0 \leq t \leq T} \|n(\cdot,t)\|_{L^p}\leq \|n_0\|_{L^p}  \quad \textrm{for\,\,all} \,\, 1\leq p<\infty
\end{equation}
and
\begin{equation}\label{Linfty-n}
\sup_{0 \leq t \leq T}\|n(\cdot,t)\|_{L^{\infty}}\leq \|n_0\|_{L^{\infty}}
\end{equation}
by letting $p\rightarrow \infty$ in \eqref{Lp-n}.

With the above bound estimate at hand, we first show $W^{1,\infty}_r(\R^d)$-estimate of $n$ in the following lemma.

\begin{lemma}\label{4-1} Let $T>0$ and $n$ be a solution to the equation \eqref{eqs-n} on the time interval $[0,T)$ with sufficient regularity. 
Assume $n_0 \in W^{1,\infty}_r(\R^d)$ with $r>d$. Then there exists $T_* >0$ such that the following estimates hold:
\begin{align*}
\sup_{0 \leq t \leq T}\|n(\cdot,t)\|_{L^{\infty}_r} \leq C\quad \mbox{and} \quad \sup_{0 \leq t \leq T_*}\|\nabla n(\cdot,t)\|_{L^{\infty}_r}\leq C,
\end{align*}
where $C>0$ only depends on $d,r$, $T$, and $n_0$.  
\end{lemma}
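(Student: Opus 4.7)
The plan is to represent $n$ via the Duhamel formula associated to \eqref{eqs-n} and run a singular Gr\"onwall argument in the weighted space $L^\infty_r(\R^d)$. The first step is to establish two weighted heat-semigroup estimates that drive the whole argument:
\[
\|e^{t\Delta} f\|_{L^\infty_r} \ls \|f\|_{L^\infty_r}, \qquad \|\na e^{t\Delta} f\|_{L^\infty_r} \ls t^{-1/2}\|f\|_{L^\infty_r}
\]
for $t \in (0,T]$, with constants depending on $T,d,r$. These follow from the elementary weight comparison $w_r(x) \le 2^{r/2}\, w_r(x-y)\, w_r(y)$, where $w_r(x):=(1+|x|^2)^{r/2}$, together with $\int w_r(z)\, K_t(z)\,dz \ls 1$ and $\int w_r(z)\,|\na K_t(z)|\,dz \ls t^{-1/2}$, which are obtained via the Gaussian rescaling $z=2\sqrt{t}\,u$ applied to the heat kernel $K_t$.

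Next I would control $\na\Psi=\na K\star n$ in $L^\infty$ uniformly on $[0,T]$. Splitting the convolution on the regions $|x-y|\le 1$ and $|x-y|>1$ and using $|\na K(x)|\ls |x|^{1-d}$ for $d\ge 2$ (and $\na K\in L^\infty$ for $d=1$) gives $\|\na\Psi(\cdot,t)\|_{L^\infty}\ls \|n(\cdot,t)\|_{L^1}+\|n(\cdot,t)\|_{L^\infty}\ls 1$ by the a priori bounds \eqref{Lp-n}--\eqref{Linfty-n}. Applying the weighted gradient-semigroup estimate (together with $\na e^{(t-s)\Delta}=e^{(t-s)\Delta}\na$) to the Duhamel identity for $n$ yields
\[
\|n(t)\|_{L^\infty_r} \le C\|n_0\|_{L^\infty_r} + C\int_0^t (t-s)^{-1/2}\|n(s)\|_{L^\infty_r}\|\na\Psi(s)\|_{L^\infty}\,ds,
\]
and the singular Gr\"onwall inequality then delivers $\sup_{0\le t\le T}\|n(\cdot,t)\|_{L^\infty_r}\le C$.

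For the gradient estimate I would exploit the algebraic identity $\na\cdot(n\na\Psi)=-n^2+\na n\cdot\na\Psi$ (which uses $-\Delta\Psi=n$) to avoid the non-integrable $(t-s)^{-1}$ singularity that a naive $\na e^{(t-s)\Delta}\na\cdot$ would produce. Differentiating Duhamel gives
\[
\na n(t) = e^{t\Delta}\na n_0 + \int_0^t (2s+1)^{(d-2)/2}\,\na e^{(t-s)\Delta}\bigl(-n^2+\na n\cdot\na\Psi\bigr)(s)\,ds,
\]
and combining $\|n^2\|_{L^\infty_r}\le \|n\|_{L^\infty_r}\|n\|_{L^\infty}\ls 1$ with $\|\na n\cdot\na\Psi\|_{L^\infty_r}\le \|\na\Psi\|_{L^\infty}\|\na n\|_{L^\infty_r}$ reduces the problem to
\[
\|\na n(t)\|_{L^\infty_r} \le C\|\na n_0\|_{L^\infty_r} + C\int_0^t (t-s)^{-1/2}\bigl(1+\|\na n(s)\|_{L^\infty_r}\bigr)\,ds,
\]
which closes by a second application of the singular Gr\"onwall inequality on some $[0,T_*]$. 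The main obstacle, in my view, is the careful bookkeeping for the weighted kernel integrals: one must check that the polynomial weight $w_r$ is absorbed by the Gaussian decay of $K_t$ (and by the additional smoothing of $\na K_t$) uniformly on $[0,T]$, and one must choose the above rewriting of $\na\cdot(n\na\Psi)$ that keeps $\na^2\Psi$ out of the estimates, since $\na^2\Psi=R_iR_j n$ fails to be in $L^\infty$ merely from $n\in L^\infty$.
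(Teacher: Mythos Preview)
Your argument is correct, but it proceeds along a genuinely different line from the paper. The paper derives weighted equations for $Y=(1+|x|^2)^{r/2}n$ and $Z=(1+|x|^2)^{r/2}\na n$ directly, then applies a parabolic maximum-principle estimate (after \cite{DP86}) of the form $\|Y(t)\|_{L^\infty}\le \|Y(0)\|_{L^\infty}+\int_0^t\|\text{source}\|_{L^\infty}\,ds$. For $Z$, the source contains a term $\na^2\Psi\cdot Z$, which they bound via $\|\na^2\Psi\|_{L^\infty}\ls \|\na n\|_{L^1\cap L^\infty}\ls \|Z\|_{L^\infty}$; this produces a \emph{quadratic} Gr\"onwall inequality and is precisely why only a local time $T_*$ appears. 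Your Duhamel approach, by contrast, uses the algebraic rewriting $\na\cdot(n\na\Psi)=-n^2+\na n\cdot\na\Psi$ \emph{before} applying $\na e^{(t-s)\Delta}$, so $\na^2\Psi$ never enters and the resulting inequality for $\|\na n\|_{L^\infty_r}$ is \emph{linear}; in fact your singular Gr\"onwall closes on all of $[0,T]$, which is slightly stronger than what the lemma claims. What the paper's method buys is that it avoids the weighted heat-kernel bookkeeping entirely and feeds more uniformly into the higher-order estimates of Lemma~\ref{4-2}, where the quadratic structure is harder to circumvent; your method buys a cleaner first-order bound and sidesteps the Calder\'on--Zygmund issue with $\na^2\Psi$ that you correctly flag.
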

\begin{proof}
We first introduce simplified notations:
\begin{equation*}
Y(x,t) :=(1+|x|^2)^{r/2}n(x,t) \quad \mbox{and} \quad Z(x,t) :=(1+|x|^2)^{r/2}\nabla n(x,t).
\end{equation*}
In order to obtain the estimate $\|Y\|_{L^{\infty}}$, we multiply the first equation of \eqref{eqs-n} by $(1+|x^2|)^{r/2}$ to get 
\begin{align}\label{eqs-Y}
\begin{aligned}
&\pa_t Y    - (2t+1)^{(d-2)/2}\nabla\Psi\cdot\nabla Y   -  \Delta Y\cr
&\quad = -r(r-2)(1+|x|^2)^{(r-4)/2}|x|^2 n     -   rd(1+|x|^2)^{(r-2)/2}n   -  2r(1+|x|^2)^{(r-2)/2}x\cdot\nabla n   \\
&\qquad    +  (2t+1)^{(d-2)/2}Y\Delta\Psi -  r(2t+1)^{(d-2)/2}(1+|x|^2)^{(r-2)/2}x n\cdot\nabla\Psi.
\end{aligned}
\end{align}
Since we have
\begin{equation*}
-  2r(1+|x|^2)^{(r-2)/2}x\cdot\nabla n
=  -\frac{2r}{1+|x|^2}x\cdot\nabla Y    +   2r^2(1+|x|^2)^{(r-4)/2}|x|^2 n,
\end{equation*}
the equation \eqref{eqs-Y} can thus be rewritten as
\begin{align*}
\begin{aligned}
&\pa_t Y  +  \left(2r\frac{x}{1+|x|^2}  -  (2t+1)^{(d-2)/2}\nabla\Psi\right)\cdot\nabla Y   -  \Delta Y
 = R_1   +  R_2,
\end{aligned}
\end{align*}
where
\begin{equation*}
R_1 := \left(r(r+2)\frac{|x|^2}{1+|x|^2} - rd\right)(1+|x|^2)^{(r-2)/2}n
\end{equation*}
and
\begin{equation*}
R_2 := -(2t+1)^{(d-2)/2}nY     -  r(2t+1)^{(d-2)/2}(1+|x|^2)^{(r-2)/2}x n\cdot\nabla\Psi.
\end{equation*}
For $R_1$ and $R_2$, we have the following estimates:
\begin{equation*}
\|R_1\|_{L^{\infty}}\leq   C(r,d)\|(1+|x|^2)^{(r-2)/2}n\|_{L^{\infty}}   \leq C(r,d)\|Y\|_{L^{\infty}}
\end{equation*}
and
\begin{align*}
\begin{aligned}
\|R_2\|_{L^{\infty}}
&\leq C(d,T)\|n\|_{L^{\infty}}\|Y\|_{L^{\infty}}
   +   C(r,d,T)\|\nabla\Psi\|_{L^{\infty}}\|(1+|x|^2)^{(r-1)/2}n\|_{L^{\infty}}\\
&\leq  C(d,T)\|n_0\|_{L^{\infty}}\|Y\|_{L^{\infty}}
   +   C(r,d,T)\|n\|_{L^1\cap L^{\infty}}\|Y\|_{L^{\infty}}   \\
   &\leq  C(r,d,T,n_0)\|Y\|_{L^{\infty}}.
\end{aligned}
\end{align*}
So, by a similar result in \cite[Proposition A.3]{DP86}, we have the following $L^{\infty}$ estimate for $Y$
\begin{align*}%\label{Linfty-Y}
\|Y(\cdot,t)\|_{L^{\infty}} \leq \|Y(\cdot,0)\|_{L^{\infty}}  +   \int_0^t\left(\|R_1(\cdot,s)\|_{L^{\infty}}  +  \|R_2(\cdot,s)\|_{L^{\infty}}\right)ds\leq  C_1   +  C_2\int_0^t\|Y(\cdot,s)\|_{L^{\infty}}\,ds.
\end{align*}
By Gr\"onwall's inequality, we have
\begin{align}\label{Linfty-Y-1}
\|Y(\cdot,t)\|_{L^{\infty}}  \leq  C_1\left(1+C_2 T e^{C_2 T}\right)  \qquad \textrm{for a.e. } 0\leq t\leq T.
\end{align}

Now, differentiating the first equation of \eqref{eqs-n} with respect to $x$, we have
\begin{align}\label{eqs-Dn}
\begin{aligned}
&\pa_t(\nabla n)  -  (2t+1)^{(d-2)/2}\nabla\Psi\cdot\nabla(\nabla n)    -   \Delta(\nabla n)  \\
&\quad = (2t+1)^{(d-2)/2}\nabla^2\Psi\cdot\nabla n   -   2(2t+1)^{(d-2)/2}n\nabla n.
\end{aligned}
\end{align}
Then multiplying the obtained equation \eqref{eqs-Dn} by $(1+|x|^2)^{r/2}$, one has
\begin{align}\label{eqs-Z-1}
\begin{aligned}
&\pa_t Z   -  (2t+1)^{(d-2)/2}\nabla\Psi\cdot\nabla Z     -   \Delta Z\cr
&\quad =  -  r(2t+1)^{(d-2)/2}(1+|x|^2)^{(r-2)/2}(x\cdot\nabla\Psi)\nabla n  - r(r-2)(1+|x|^2)^{(r-4)/2}|x|^2 \nabla n  \\
&\qquad    -   rd(1+|x|^2)^{(r-2)/2}\nabla n  -  2r(1+|x|^2)^{(r-2)/2}x\cdot\nabla(\nabla n)  +  (2t+1)^{(d-2)/2}\nabla^2\Psi\cdot Z   \\
&\qquad      -  2(2t+1)^{(d-2)/2}n Z.
\end{aligned}
\end{align}
Analogously, we have
\begin{equation*}
-  2r(1+|x|^2)^{(r-2)/2}x\cdot\nabla(\nabla n)
  =  -\frac{2r}{1+|x|^2}x\cdot\nabla Z    +   2r^2(1+|x|^2)^{(r-4)/2}|x|^2\nabla n,
\end{equation*}
which together with \eqref{eqs-Z-1} leads to
\begin{align}\label{eqs-Z-2}
\pa_t Z +  \left(2r\frac{x}{1+|x|^2}-  (2t+1)^{(d-2)/2}\nabla\Psi\right)\cdot\nabla Z - \Delta Z =  R_3   +R_4,
\end{align}
where
\begin{equation*}
R_3 :=\left(r(r+2)\frac{|x|^2}{1+|x|^2}-rd\right)(1+|x|^2)^{(r-2)/2}\nabla n
\end{equation*}
and
\begin{align*}
R_4  &:= - r(2t+1)^{(d-2)/2}(1+|x|^2)^{(r-2)/2}(x\cdot\nabla\Psi)\nabla n +  (2t+1)^{(d-2)/2}\nabla^2\Psi\cdot Z  -  2(2t+1)^{(d-2)/2}n Z.
\end{align*}
Hence, we can easily deduce that
\begin{align}\label{K-1}
\|R_3\|_{L^{\infty}}  \leq  C(r,d)\|(1+|x|^2)^{(r-2)/2}\nabla n\|_{L^{\infty}} \leq C(r,d)\|Z\|_{L^{\infty}}
\end{align}
and
\begin{align}\label{K-2}
\begin{aligned}
\|R_4\|_{L^{\infty}}  &\leq  C(r,d,T)\|\nabla\Psi\|_{L^{\infty}}\|(1+|x|^2)^{(r-1)/2}\nabla n\|_{L^{\infty}}   \\
&\quad     +  C(d,T)\|\nabla^2\Psi\cdot Z\|_{L^{\infty}}   +   C(d,T)\|n Z\|_{L^{\infty}}    \\
&   \leq  C(r,d,T)\|n\|_{L^1\cap L^{\infty}}\|Z\|_{L^{\infty}}  +  C(d,T)\|\nabla n\|_{L^1\cap L^{\infty}}\|Z\|_{L^{\infty}}\\
&\leq C(r,d,T,n_0)\left(\|Z\|_{L^{\infty}} + \|Z\|_{L^{\infty}}^2\right).
\end{aligned}
\end{align}
Here we used
\[
\|\nabla^k \Psi\|_{L^\infty} \leq C(r,d)\|\nabla^{k-1} n\|_{L^1 \cap L^\infty}, \quad k \in \mathbb{N}
\]
for some $C > 0$. Considering the equation \eqref{eqs-Z-2} and using the above estimates \eqref{K-1} and \eqref{K-2}, we obtain that
\begin{align*}
\|Z(\cdot,t)\|_{L^{\infty}} \leq  \|Z(\cdot,0)\|_{L^{\infty}}
       +  \int_0^t\left(\|R_3(\cdot,s)\|_{L^{\infty}}   +   \|R_4(\cdot,s)\|_{L^{\infty}}\right)ds \leq   C_3  +  C_4\int_0^t\|Z(\cdot,s)\|^2_{L^{\infty}}\,ds,
\end{align*}
which further implies
\begin{align}\label{Linfty-Z}
\|Z(\cdot,t)\|_{L^{\infty}}  \leq  \frac{\|Z(\cdot,0)\|_{L^{\infty}}}{1-C_4\|Z(\cdot,0)\|_{L^{\infty}}t}  \qquad \textrm{for a.e. } 0\leq t< \frac{1}{C_4\|Z(\cdot,0)\|_{L^{\infty}}}.
\end{align}
The desired results can be easily concluded from \eqref{Linfty-Y-1} and \eqref{Linfty-Z}.
\end{proof}

Next, we will devote ourselves to estimates for higher-order derivatives. 

\begin{lemma}\label{4-2}
Let $T>0$ and $n$ be a solution to the equation \eqref{eqs-n} on the time interval $[0,T)$ with sufficient regularity. 
Assume $\nabla^2 n_0 \in W^{1,\infty}_r(\R^d)$ with $r>d$. Then there exists $T_* >0$ such that 
\[
\sup_{0 \leq t \leq T_*}\|\nabla^2 n(\cdot,t)\|_{W^{1,\infty}_r}\leq C,
\]
where $C>0$ only depends on $d,r$, $T$, and $n_0$.   
\end{lemma}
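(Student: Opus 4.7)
The plan is to extend the argument of Lemma \ref{4-1} two more orders by introducing the weighted quantities
\[
P(x,t) := (1+|x|^2)^{r/2}\nabla^2 n(x,t), \qquad Q(x,t) := (1+|x|^2)^{r/2}\nabla^3 n(x,t),
\]
and deriving transport–diffusion equations for them in complete analogy with the equations obtained for $Y$ and $Z$. Differentiating \eqref{eqs-n} twice, multiplying by $(1+|x|^2)^{r/2}$, and applying the same algebraic identity
\[
-2r(1+|x|^2)^{(r-2)/2}x\cd \na g = -\f{2r}{1+|x|^2}x\cd \na\bigl((1+|x|^2)^{r/2}g\bigr) + 2r^2(1+|x|^2)^{(r-4)/2}|x|^2 g
\]
that was used to pass from \eqref{eqs-Z-1} to \eqref{eqs-Z-2}, one obtains
\[
\pa_t P + \left(2r\f{x}{1+|x|^2}-(2t+1)^{(d-2)/2}\na\Psi\right)\cd \na P - \Delta P = R_5 + R_6,
\]
where $R_5$ collects terms of the form $(1+|x|^2)^{(r-2)/2}\na^2 n$ (bounded by $C(r,d)\|P\|_{L^\infty}$) and $R_6$ collects nonlinear terms involving $(x\cd \na\Psi)\,\na^2 n$, $\na\Psi\cd \na P$ corrections, $\na^2\Psi\cd \na n$ and $\na^3\Psi\,n$ (all weighted), together with the quadratic products $(\na n)(\na^2 n)$ and $n\,\na^2 n$.

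The source terms $R_5,R_6$ are estimated by invoking the classical singular-integral bound $\|\na^k\Psi\|_{L^\infty}\le C\|\na^{k-1}n\|_{L^1\cap L^\infty}$, the $L^p$ and $L^\infty_r$, $W^{1,\infty}_r$ bounds on $n$ from \eqref{Lp-n}, \eqref{Linfty-n} and Lemma \ref{4-1} (noting that the assumption $r>d$ makes $(1+|x|^2)^{-r/2}$ integrable, so that $L^\infty_r$ bounds automatically imply $L^1$ bounds), and, crucially, the already controlled weighted norm of the derivative being analysed. This yields
\[
\|R_5\|_{L^\infty}+\|R_6\|_{L^\infty} \le C\bigl(1+\|P\|_{L^\infty}+\|P\|_{L^\infty}^2\bigr),
\]
so that the parabolic comparison principle used in Lemma \ref{4-1} (via \cite{DP86}) produces the Riccati-type inequality
\[
\f{d}{dt}\|P(\cd,t)\|_{L^\infty}\le C\bigl(1+\|P(\cd,t)\|_{L^\infty}+\|P(\cd,t)\|_{L^\infty}^2\bigr),
\]
from which a local-in-time bound $\sup_{0\le t\le T_*}\|\na^2 n\|_{L^\infty_r}\le C$ follows on an interval $[0,T_*]$ determined by $\|\na^2 n_0\|_{L^\infty_r}$.

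The same procedure is then repeated for $Q$ by differentiating \eqref{eqs-n} three times. The only genuinely new ingredient is the appearance of $\na^4\Psi$ paired with $n$ and $\na^3\Psi$ paired with $\na n$; the former is bounded via $\|\na^4\Psi\|_{L^\infty}\le C\|\na^3 n\|_{L^1\cap L^\infty}\le C(1+\|Q\|_{L^\infty})$, which is the reason the estimate for $\na^3 n$ must be closed in a bootstrap fashion. All other contributions are controlled by the already-established $L^\infty_r$ bounds on $n,\na n,\na^2 n$, and one again arrives at a Riccati inequality for $\|Q\|_{L^\infty}$, shrinking $T_*$ if necessary. The main obstacle is precisely this bookkeeping for the $\na^4\Psi$ term: the estimate cannot be closed without simultaneously tracking the weighted $L^\infty$ norm of $\na^3 n$ on the right-hand side, so the two Riccati inequalities must be set up and integrated in order. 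Aside from this, the proof is a mechanical repetition of the steps performed for $Y$ and $Z$ in Lemma \ref{4-1}.
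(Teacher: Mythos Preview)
Your approach coincides with the paper's (which calls your $P$ by $G$): derive transport--diffusion equations for the weighted Hessian and third derivatives via the same algebraic identity, estimate the source terms using $\|\nabla^k\Psi\|_{L^\infty}\le C\|\nabla^{k-1}n\|_{L^1\cap L^\infty}$ together with Lemma \ref{4-1}, and apply the parabolic comparison principle from \cite{DP86}.

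The only notable difference is sharpness. Once $\|Z\|_{L^\infty}$ is controlled by Lemma \ref{4-1}, the paper observes that the source terms for $G$ are actually \emph{linear} in $\|G\|_{L^\infty}$ (for instance $\|\nabla^3\Psi\cdot Z\|_{L^\infty}\le C\|\nabla^2 n\|_{L^1\cap L^\infty}\|Z\|_{L^\infty}\le C\|G\|_{L^\infty}$, since $\|Z\|_{L^\infty}$ is already bounded), so a straight Gr\"onwall inequality applies; and once $\|G\|_{L^\infty}$ is bounded, the same linearity holds for $Q$. Thus no further shrinking of $T_*$ beyond what Lemma \ref{4-1} dictates is needed. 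Your Riccati estimate is not wrong, just unnecessarily conservative. As a minor aside, your list of quadratic source terms in $R_6$ has some index slips: differentiating $\nabla\cdot(n\nabla\Psi)$ twice produces $\nabla^3\Psi\cdot\nabla n$, $\nabla^2\Psi\cdot\nabla^2 n$, $(\nabla n)(\nabla n)$ and $n\,\nabla^2 n$, rather than $\nabla^2\Psi\cdot\nabla n$, $\nabla^3\Psi\,n$ and $(\nabla n)(\nabla^2 n)$.
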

\begin{proof}
For simplicity of notation, we denote 
\[
G(x,t) :=(1+|x|^2)^{r/2}\nabla^2n(x,t) \quad \mbox{and} \quad Q(x,t) :=(1+|x|^2)^{r/2}\nabla^3n(x,t).
\]
We apply $\nabla$ to the equation \eqref{eqs-Dn} to deduce 
\begin{align}\label{eqs-D2n}
\begin{aligned}
&\pa_t(\nabla^2n) -   (2t+1)^{(d-2)/2}\nabla\Psi\cdot\nabla(\nabla^2 n)   -   \Delta(\nabla^2 n)    \cr
&\quad = (2t+1)^{(d-2)/2}\nabla^3\Psi\cdot\nabla n    +  2(2t+1)^{(d-2)/2}\nabla^2\Psi\cdot\nabla^2 n   \cr  
&\qquad  -  2(2t+1)^{(d-2)/2}\nabla n\nabla n   - 2(2t+1)^{(d-2)/2}n\nabla^2 n.
\end{aligned}
\end{align}
Multiplying \eqref{eqs-D2n} by $(1+|x|^2)^{r/2}$ and using the definition of $G(x,t)$, we get 
\begin{align}\label{eqs-G-1}
\begin{aligned}
&\pa_t G  - (2t+1)^{(d-2)/2}\nabla\Psi\cdot\nabla G  -  \Delta G   \cr
&\quad = -r(2t+1)^{(d-2)/2}\nabla\Psi\cdot(1+|x|^2)^{(r-2)/2}x\nabla^2 n \cr 
&  \qquad    -  r(r-2)(1+|x|^2)^{(r-4)/2}|x|^2\nabla^2 n  - rd(1+|x|^2)^{(r-2)/2}\nabla^2n \cr  
& \qquad  -  2r(1+|x|^2)^{(r-2)/2}x\cdot\nabla(\nabla^2n)  + (2t+1)^{(d-2)/2}\nabla^3\Psi\cdot Z  \cr  
& \qquad  +  2(2t+1)^{(d-2)/2}\nabla^2\Psi\cdot G  -    2(2t+1)^{(d-2)/2}Z\nabla n    -    2(2t+1)^{(d-2)/2}nG.
\end{aligned}
\end{align}
For the fourth term on the right-hand-side of \eqref{eqs-G-1}, we further deduce that 
\begin{align}\label{G-2}
-2r(1+|x|^2)^{(r-2)/2}x\cdot\nabla(\nabla^2n)
=-\frac{2r}{1+|x|^2}x\cdot\nabla G    +   2r^2(1+|x|^2)^{(r-4)/2}|x|^2\nabla^2 n.
\end{align}
Substituting \eqref{G-2} into \eqref{eqs-G-1} leads to 
\begin{align}\label{G-3}
\pa_t G  +   \left(2r\frac{x}{1+|x|^2} - (2t+1)^{(d-2)/2}\nabla\Psi\right)\cdot\nabla G  -  \Delta G = R_5  + R_6,
\end{align}
where 
\begin{equation*}
R_5 :=\left(r(r+2)\frac{|x|^2}{1+|x|^2}-rd\right)(1+|x|^2)^{(r-2)/2}\nabla^2 n
\end{equation*}
and 
\begin{equation*}
\begin{aligned}
R_6 &:= -r(2t+1)^{(d-2)/2}\nabla\Psi\cdot(1+|x|^2)^{(r-2)/2}x\nabla^2 n   + (2t+1)^{(d-2)/2}\nabla^3\Psi\cdot Z \cr 
&\quad  +   2(2t+1)^{(d-2)/2}\nabla^2\Psi\cdot G   -    2(2t+1)^{(d-2)/2}Z\nabla n    -    2(2t+1)^{(d-2)/2}nG.
\end{aligned}
\end{equation*}
Similarly, we have that 
\begin{align}\label{R-5}
\|R_5\|_{L^{\infty}}\leq C(r,d)\|(1+|x|^2)^{(r-2)/2}\nabla^2n\|_{L^{\infty}} 
\leq C(r,d)\|G\|_{L^{\infty}}.
\end{align}
By Lemma \ref{4-1} and \eqref{Linfty-n}, one can bound $R_6$ as 
\begin{align}\label{R-6}
\begin{aligned}
\|R_6\|_{L^{\infty}} 
&\leq C(r,d,T)\|\nabla\Psi\|_{L^{\infty}}\|(1+|x|^2)^{(r-1)/2}\nabla^2n\|_{L^{\infty}}  +  C(d,T)\|\nabla^3\Psi\|_{L^{\infty}}\|Z\|_{L^{\infty}}    \cr   
&\quad + C(d,T)\|\nabla^2\Psi\|_{L^{\infty}}\|G\|_{L^{\infty}}  +  C(d,T)\|Z\|_{L^{\infty}}\|\nabla n\|_{L^{\infty}}  +  C(d,T)\|n\|_{L^{\infty}}\|G\|_{L^{\infty}}   \cr  
&\leq C(r,d,T)\|n\|_{L^1\cap L^{\infty}}\|G\|_{L^{\infty}}   +   C(d,T)\|\nabla^2 n\|_{L^1\cap L^{\infty}}\|Z\|_{L^{\infty}}  \cr  
&\quad +   C(d,T)\|\nabla n\|_{L^1\cap L^{\infty}}\|G\|_{L^{\infty}}   +   C(d,T)\|Z\|_{L^{\infty}}^2   \cr   
&\leq C(r,d,T)\|n\|_{L^1\cap L^{\infty}}\|G\|_{L^{\infty}}  +  C(d,T)\|G\|_{L^{\infty}}\|Z\|_{L^{\infty}}  +   C(d,T)\|Z\|_{L^{\infty}}^2    \cr   
&\leq C(r,d,T,n_0)  +  C(r,d,T,n_0)\|G\|_{L^{\infty}}
\end{aligned}
\end{align}
for $t \leq T_*$, where we used the boundedness of $\|Z\|_{L^{\infty}}$. With \eqref{R-5} and \eqref{R-6} at hand, we can thus infer from the equation \eqref{G-3} that 
\begin{align*}
\|G(\cdot,t)\|_{L^{\infty}}\leq \|G(\cdot,0)\|_{L^{\infty}}  +  \int_0^t(\|R_5(\cdot,s)\|_{L^{\infty}} + \|R_6(\cdot,s)\|_{L^{\infty}})\,ds \leq C_5  +  C_6\int_0^t\|G(\cdot,s)\|_{L^{\infty}}ds,
\end{align*}
which leads to 
\begin{equation*}%\label{Linfty-G}
\sup_{0 \leq t \leq T_*}\|G(\cdot,t)\|_{L^{\infty}} \leq C_5 (1+ C_6T_*e^{C_6T_*}).
\end{equation*}

Next, we estimate $\|Q\|_{L^{\infty}}$. To this end, we apply $\nabla$ to \eqref{eqs-D2n} to obtain that 
\begin{align}\label{eqs-D3n}
\begin{aligned}
&\partial_t(\nabla^3 n)  -  (2t+1)^{(d-2)/2}\nabla\Psi\cdot\nabla(\nabla^3n)  -  \Delta(\nabla^3n)   \cr    
&\quad = 3(2t+1)^{(d-2)/2}\nabla^2\Psi\cdot\nabla^3n   +  (2t+1)^{(d-2)/2}\nabla^4\Psi\cdot\nabla n  +  3(2t+1)^{(d-2)/2}\nabla^3\Psi\cdot\nabla^2n   \cr   
&\qquad   -  6(2t+1)^{(d-2)/2}\nabla n\nabla^2n   -  2(2t+1)^{(d-2)/2}n\nabla^3n.
\end{aligned}
\end{align}
It follows from multiplying \eqref{eqs-D3n} by $(1+|x|^2)^{r/2}$ that 
\begin{align}\label{eqs-Q-1}
\begin{aligned}
&\partial_tQ  -  (2t+1)^{(d-2)/2}\nabla\Psi\cdot\nabla Q  -  \Delta Q\cr
&\quad =  -r(2t+1)^{(d-2)/2}\nabla\Psi\cdot(1+|x|^2)^{(r-2)/2}x\nabla^3n -r(r-2)(1+|x|^2)^{(r-4)/2}|x|^2\nabla^3n  \cr   
&\quad    -   rd(1+|x|^2)^{(r-2)/2}\nabla^3n -2r(1+|x|^2)^{(r-2)/2}x\cdot\nabla(\nabla^3n)   +  3(2t+1)^{(d-2)/2}\nabla^2\Psi\cdot Q  \cr   
&\quad  +   (2t+1)^{(d-2)/2}\nabla^4\Psi\cdot Z  + 3(2t+1)^{(d-2)/2}\nabla^3\Psi\cdot G  \cr   
& \quad  -  6(2t+1)^{(d-2)/2}\nabla  n\, G   -  2(2t+1)^{(d-2)/2}nQ.
\end{aligned}	
\end{align}
Similarly as before, the forth term on he right-hand-side of \eqref{eqs-Q-1} can be rewritten as 
\begin{align}\label{Q-1}
-2r(1+|x|^2)^{(r-2)/2}x\cdot\nabla(\nabla^3n)
=-2r\frac{x}{1+|x|^2}\cdot\nabla Q  +  2r^2(1+|x|^2)^{(r-4)/2}|x|^2\nabla^3n.
\end{align}
Substituting \eqref{Q-1} into \eqref{eqs-D3n} and rearranging the resulting equality, one has 
\begin{align*}%\label{eqs-Q-2}
\partial_t Q  +  \left(2r\frac{x}{1+|x|^2}-(2t+1)^{(d-2)/2}\nabla\Psi\right)\cdot\nabla Q  -  \Delta Q  =  R_7 + R_8
\end{align*}
with 
\begin{align*}%\label{R7}
R_7 := \left(r(r+2)\frac{|x|^2}{1+|x|^2}-rd\right)(1+|x|^2)^{(r-2)/2}\nabla^3n
\end{align*}
and 
\begin{align*}%\label{R8}
\begin{aligned}
R_8 &:= -r(2t+1)^{(d-2)/2}\nabla\Psi\cdot(1+|x|^2)^{(r-2)/2}x\nabla^3n   +  3(2t+1)^{(d-2)/2}\nabla^2\Psi\cdot Q  \cr   
& \quad  +   (2t+1)^{(d-2)/2}\nabla^4\Psi\cdot Z   + 3(2t+1)^{(d-2)/2}\nabla^3\Psi\cdot G  \cr   
& \quad -  6(2t+1)^{(d-2)/2}\nabla n\, G   -  2(2t+1)^{(d-2)/2}nQ.
\end{aligned}
\end{align*}

We can then bound $R_7$ and $R_8$ by using the obtained boundedness of $Y$, $Z$, $G$ and \eqref{Linfty-n}. Indeed, we have

\begin{align*}%\label{R-7}
\|R_7\|_{L^{\infty}} \leq C(r,d)\|(1+|x|^2)^{(r-2)/2}\nabla^3n\|_{L^{\infty}} \leq C(r,d)\|Q\|_{L^{\infty}}
\end{align*} 
and
\begin{align*}%\label{R-8}
\begin{aligned}
\|R_8\|_{L^{\infty}} &\leq C(r,d,T)\|\nabla\Psi\|_{L^{\infty}}\|(1+|x|^2)^{(r-1)/2}\nabla^3n\|_{L^{\infty}}   +   C(d,T)\|\nabla^2\Psi\|_{L^{\infty}}\|Q\|_{L^{\infty}} \cr
&\quad + C(d,T)\|\nabla^4\Psi\|_{L^{\infty}}\|Z\|_{L^{\infty}}   +  C(d,T)\|\nabla^3\Psi\|_{L^{\infty}}\|G\|_{L^{\infty}} \cr  
& \quad + C(d,T)\|\nabla n\|_{L^{\infty}}\|G\|_{L^{\infty}}   +  C(d,T)\|n\|_{L^{\infty}}\|Q\|_{L^{\infty}}  \cr  
&\leq C(r,d,T)\|n\|_{L^1\cap L^{\infty}}\|Q\|_{L^{\infty}}  +   C(d,T)\|\nabla n\|_{L^1\cap L^{\infty}}\|Q\|_{L^{\infty}}   \cr  
&\quad + C(d,T)\|\nabla^3n\|_{L^1\cap L^{\infty}}\|Z\|_{L^{\infty}}   +  C(d,T)\|\nabla^2n\|_{L^1\cap L^{\infty}}\|G\|_{L^{\infty}}   +  C(d,T)\|Z\|_{L^{\infty}}\|G\|_{L^{\infty}}  \cr  
&\leq C(r,d,T)\|n\|_{L^1\cap L^{\infty}}\|Q\|_{L^{\infty}}   +  C(d,T)\|Z\|_{L^{\infty}}\|Q\|_{L^{\infty}}   \cr  
&\quad +  C(d,T)\|G\|^2_{L^{\infty}}   +   C(d,T)\|Z\|_{L^{\infty}}\|G\|_{L^{\infty}}   \cr  
&\leq C(r,d,T,n_0)  +  C(r,d,T,n_0)\|Q\|_{L^{\infty}}
\end{aligned}
\end{align*} 
for $t \leq T_*$, where we again used the boundedness of $\|Z(\cdot,t)\|_{L^{\infty}}$ for all $t  \leq T_*$.
These estimates yield
\begin{align*}
\|Q(\cdot,t)\|_{L^{\infty}} \leq \|Q(\cdot,0)\|_{L^{\infty}}  +  \int_0^t(\|R_7(\cdot,s)\|_{L^{\infty}}  +  \|R_8(\cdot,s)\|_{L^{\infty}})\,ds 
\leq C_7  +  C_8\int_0^t\|Q(\cdot,s)\|_{L^{\infty}}\,ds,
\end{align*}
which implies that 
\[
\sup_{0 \leq t \leq T_*}\|Q(\cdot,t)\|_{L^{\infty}} \leq C_7(1+C_8 T_* e^{C_8T_*}).
\]
This completes the proof.
\end{proof}

\begin{proof}[Proof of Proposition \ref{prop_bdd}] The proof follows from a simple combination of Lemmas \ref{4-1} and \ref{4-2}.
\end{proof}

%%%%%%%%%%%%%%%%%%%%%%%%%%%%%%%%%%%%%%%%%%%%%%%%%%%%%%%%%%%%%%%%%%%%%%%%%%%%%%%%%
%
%
%                      \section*{Acknowledgments}
%
%
%%%%%%%%%%%%%%%%%%%%%%%%%%%%%%%%%%%%%%%%%%%%%%%%%%%%%%%%%%%%%%%%%%%%%%%%%%%%%%%%%
\section*{Acknowledgments}
The research of JAC was supported by the Advanced Grant Nonlocal-CPD (Nonlocal PDEs for Complex Particle Dynamics: Phase Transitions, Patterns and Synchronization) of the European Research Council Executive Agency (ERC) under the European Union’s Horizon 2020 research and innovation programme (grant agreement No. 883363) and also partially supported by EPSRC grant number EP/T022132/1.
The work of YPC is supported by NRF grant (No. 2017R1C1B2012918), POSCO Science Fellowship of POSCO TJ Park Foundation, and Yonsei University Research Fund of 2020-22-0505. YP is partially supported by the Applied Fundamental Research Program of Sichuan Province (No. 2020YJ0264).

%%%%%%%%%%%%%%%%%%%%%%%%%%%%%%%%%%%%%%%%%%%%%%%%%%%%%%%%%%%%%%%%%%%%%%%%%%%%%%%%%
%
%
%                        thebibliography
%
%
%%%%%%%%%%%%%%%%%%%%%%%%%%%%%%%%%%%%%%%%%%%%%%%%%%%%%%%%%%%%%%%%%%%%%%%%%%%%%%%%%

\end{document}